\documentclass[12pt,leqno,a4paper]{amsart}
\usepackage{amssymb,enumerate}
\usepackage[OT2, T1]{fontenc}
\usepackage[russian, english]{babel}
\overfullrule 5pt

\textwidth160mm
\oddsidemargin5mm
\evensidemargin5mm

\newcommand{\FF}{{\mathbb{F}}}

\newcommand{\fA}{{\mathfrak{A}}}
\newcommand{\fS}{{\mathfrak{S}}}

\newcommand{\bC}{{\mathbf{C}}}
\newcommand{\bH}{{\mathbf{H}}}
\newcommand{\bT}{{\mathbf{T}}}

\newcommand{\cO}{{\mathcal{O}}}
\newcommand{\cR}{{\mathcal{R}}}
\newcommand{\cS}{{\mathcal{S}}}

\newcommand{\Aut}{{\operatorname{Aut}}}
\newcommand{\fix}{\operatorname{fix}}                   
\newcommand{\Out}{{\operatorname{Out}}}
\newcommand{\Syl}{\operatorname{Syl}}                   
\newcommand{\Stab}{\operatorname{Stab}}
\newcommand{\GL}{{\operatorname{GL}}}
\newcommand{\PGL}{{\operatorname{PGL}}}
\newcommand{\PSL}{{\operatorname{L}}}
\newcommand{\SL}{{\operatorname{SL}}}
\newcommand{\PGU}{{\operatorname{PGU}}}
\newcommand{\PSU}{{\operatorname{U}}}
\newcommand{\SU}{{\operatorname{SU}}}
\newcommand{\OO}{{\operatorname{O}}}
\newcommand{\PSp}{{\operatorname{S}}}
\newcommand{\Sp}{{\operatorname{Sp}}}
\newcommand{\tw}[1]{{}^#1\!}
\newcommand\co{{:}}

\newcommand{\seq}[1]{\left<#1\right>}                 
\newcommand{\set}[1]{\left\{#1\right\}}               
\newcommand{\NN}{\mbox{\foreignlanguage{russian}{\textrm{I}}}}    
\newcommand{\udot}{^{{}^{\textbf{ .}}}}               
\newcommand{\su}{\lhd\!\!\lhd \, }                    


\newtheorem{thm}{Theorem}[section]
\newtheorem{lem}[thm]{Lemma}

\newtheorem{prop}[thm]{Proposition}

\newtheorem*{thmW}{Theorem}
\newtheorem*{thmA}{Theorem A}
\newtheorem*{corB}{Corollary B}
\newtheorem*{thmB}{Theorem C}

\theoremstyle{definition}
\newtheorem{exmp}[thm]{Example}

\theoremstyle{remark}

\raggedbottom

\begin{document}

\title{A generalisation of a theorem of Wielandt}

\date{\today}

\author{Francesco Fumagalli}
\address{Dipartimento di Matematica e Informatica ``Ulisse Dini'',
         viale Morgagni 67/A, 50134 Firenze, Italy.}
\email{fumagalli@math.unifi.it}
\author{Gunter Malle}
\address{FB Mathematik, TU Kaiserslautern, Postfach 3049,
         67653 Kaisers\-lautern, Germany.}
\email{malle@mathematik.uni-kl.de}

\thanks{The second author gratefully acknowledges financial support by ERC
  Advanced Grant 291512.}

\keywords{subnormality, Wielandt's criterion, Quillen's conjecture}

\subjclass[2010]{Primary 20B05, 20D35; Secondary  20D05}

\begin{abstract}
In 1974, Helmut Wielandt proved that in a finite group $G$, a subgroup $A$ is
subnormal if and only if it is subnormal in every $\seq{A,g}$ for all $g\in G$.
In this paper, we prove that the subnormality of an odd order nilpotent
subgroup $A$ of $G$ is already guaranteed by a seemingly weaker condition:
$A$ is subnormal in $G$ if for every conjugacy class $C$ of $G$ there exists
$c\in C$ for which $A$ is subnormal in $\seq{A,c}$.
We also prove the following property of finite non-abelian simple groups:
if $A$ is a subgroup of odd prime order $p$ in a finite almost simple group $G$,
then there exists a cyclic $p'$-subgroup of $F^*(G)$ which does not normalise
any non-trivial $p$-subgroup of $G$ that is generated by conjugates of~$A$.
\end{abstract}

\maketitle


\section{Introduction}
The main result of our paper is the following criterion for the existence
of a non-trivial normal $p$-subgroup in a finite group:

\begin{thmA}   \label{Main_Thm}
 Let $G$ be a finite group and $p$ be an odd prime. Let $A$ be a $p$-subgroup
 of $G$ such that
 $$\text{for every conjugacy class $C$ of $G$ there exists $g\in C$
  with $A$ subnormal in $\seq{A,g}$}.\leqno{(*)}$$
 Then $A\leq O_p(G)$.
\end{thmA}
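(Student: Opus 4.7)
I plan to argue by induction on $(|G|,|A|)$ lexicographically, taking $(G,A)$ to be a minimal counterexample. If $O_p(G)\neq 1$, the hypothesis $(*)$ descends to $\bar G:=G/O_p(G)$ with $\bar A:=AO_p(G)/O_p(G)$: for any class $\bar C$ of $\bar G$ and any $G$-class $C$ projecting into $\bar C$, a $(*)$-witness $g\in C$ projects to an element $\bar g\in\bar C$ with $\bar A\su\seq{\bar A,\bar g}$, since subnormality is preserved under quotients. Induction then yields $\bar A\leq O_p(\bar G)=1$, so $A\leq O_p(G)$, a contradiction. Thus $O_p(G)=1$. Next, I would reduce to $|A|=p$: if $A_0\leq A$ has order $p$, then whenever $A\su\seq{A,g}$, nilpotence of $A$ forces $A\leq O_p(\seq{A,g})$, so $A_0\leq O_p(\seq{A,g})$ is subnormal in $\seq{A,g}$, and intersecting the chain with $\seq{A_0,g}$ gives $A_0\su\seq{A_0,g}$. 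Hence $A_0$ inherits $(*)$, and the minimality of $|A|$ forces $|A|=p$.

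\textbf{Key observation.} The next step is to reformulate $(*)$ for $p'$-elements. If $c\in G$ has $p'$-order and $A\su H:=\seq{A,c}$, then $A\leq O_p(H)$, and the normal closure $A^H$ is a non-trivial $p$-subgroup of $G$ generated by $G$-conjugates of $A$ and normalised by $c$. Contrapositively, it suffices to exhibit a single $p'$-element $c\in G$ that normalises no non-trivial $p$-subgroup of $G$ generated by $G$-conjugates of $A$: then $A\not\su\seq{A,c^x}$ for every $x\in G$, and the class $c^G$ violates $(*)$.

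\textbf{Producing $c$ via Theorem C.} Since $O_p(G)=1$, one has $F^*(G)=O_{p'}(G)E(G)$. Theorem C is precisely the almost simple instance of the element $c$ sought, producing such a $c$ inside $F^*(\tilde G)$ of an almost simple $\tilde G$. The plan is to reduce the general case to the almost simple one by analysing $A$'s action on the components of $E(G)$: apply Theorem C to an appropriately chosen almost simple section of $G$ (for example $L\seq{A}/Z(L)$ when $L$ is an $A$-invariant component on which $A$ acts non-trivially, or a section built from a cyclic $A$-orbit of components), and lift the resulting cyclic $p'$-subgroup back to a $p'$-element $c\in F^*(G)$. The hardest part will be this lift: one must verify that any $p$-subgroup of $G$ generated by $G$-conjugates of $A$ and normalised by $c$ would induce in the chosen almost simple section a $p$-subgroup generated by conjugates of $A$'s image and normalised by $c$'s image, contradicting Theorem C. The case where $A$ permutes components non-trivially, so that no single almost simple section obviously contains $A$, is the principal technical hurdle.
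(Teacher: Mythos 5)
Your reductions to $O_p(G)=1$ and $|A|=p$ are sound, and your ``key observation'' (a $p'$-element $c$ normalising no non-trivial $p$-subgroup generated by conjugates of $A$ yields a class violating $(*)$) is exactly the mechanism the paper exploits. But the proposal stops precisely where the real work begins, and it silently skips one whole case. First, you pass from $O_p(G)=1$ to $F^*(G)=O_{p'}(G)E(G)$ and then discuss only the components; you never address the situation $E(G)=1$, i.e.\ an abelian (elementary abelian $q$-) minimal normal subgroup $M$. There the condition $(*)$, applied to the classes of elements $m\in M$, only yields that every $m$ centralises some conjugate of $A$, i.e.\ $M=\bigcup_g C_M(A)^g$; ruling this out is not formal --- the paper invokes a theorem of Isaacs--Navarro--Wolf (their Lemma~3.1, a regular-orbit-type statement for odd $p$ on a faithful irreducible coprime module), and this is where the hypothesis $p>2$ enters in the solvable case. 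Without some such input your argument cannot close this case, and it is not an instance of Theorem~C.

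Second, the case you flag as ``the principal technical hurdle'' --- $A$ permuting the components non-trivially --- is a genuine separate case, not a lifting technicality, and it is not resolved by Theorem~C either. The paper handles it by passing to the permutation group $\overline{G}=G/K$ on the set of components, applying Gluck's theorem (for $p$ odd there is a subset $\cR$ of the domain with $\overline{G}_{\cR}\cap O_p(\overline{G})=1$), and then testing $(*)$ against the class of a carefully built $q$-element $s_{\cR}$ supported on $\cR$: a commutator computation shows $[a,s_{\cR}^g]$ has a non-trivial $q$-part in a component outside $\cR$ unless $a$ stabilises $\cR$, contradicting the choice of $\cR$. Only in the remaining case ($A$ normalising every component and acting non-trivially on some $S_1$) does the paper reduce to the almost simple group $N_G(S_1)/C_G(S_1)$ and apply Theorem~C, and even there the ``lift'' is done by projecting $[A,\seq{m^g}]$ onto $S_1$, not by choosing $c$ first. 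So the architecture you propose is recognisably the paper's, but two essential external ingredients (Isaacs--Navarro--Wolf and Gluck) and the arguments built on them are missing, and without them the proof does not go through.
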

As an immediate consequence we have that

\begin{corB}    \label{Main_Cor}
 If $A$ is an odd order nilpotent subgroup of a finite group $G$ satisfying
 condition $(*)$, then $A$ is subnormal in $G$.
\end{corB}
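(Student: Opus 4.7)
The plan is to reduce Corollary B to Theorem A by splitting the nilpotent subgroup $A$ into its Sylow components. Since $|A|$ is odd and $A$ is nilpotent, I would write $A = A_{p_1} \times \cdots \times A_{p_k}$, a direct product of Sylow subgroups for distinct odd primes $p_1, \ldots, p_k$. The aim is to apply Theorem A to each $A_{p_i}$ separately and then reassemble the information.

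The first step is to verify that condition $(*)$ is inherited by each Sylow factor $A_{p_i}$. Fix a conjugacy class $C$ of $G$ and choose $g \in C$ with $A$ subnormal in $\seq{A,g}$. Since $A$ is nilpotent, $A_{p_i}$ is (characteristic and hence) normal in $A$, and together with $A \su \seq{A,g}$ this yields $A_{p_i} \su \seq{A,g}$. Invoking the standard fact that subnormality is inherited by intermediate subgroups (if $H \su K$ and $H \leq L \leq K$ then $H \su L$), applied with $L = \seq{A_{p_i}, g}$, I then obtain $A_{p_i} \su \seq{A_{p_i}, g}$. Thus $A_{p_i}$ satisfies condition $(*)$ in $G$ as a $p_i$-subgroup, and since $p_i$ is odd, Theorem A applied to $A_{p_i}$ yields $A_{p_i} \leq O_{p_i}(G)$.

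Setting $\pi = \{p_1, \ldots, p_k\}$, the containments $A_{p_i} \leq O_{p_i}(G)$ combine to give $A \leq O_\pi(G) \trianglelefteq G$. Since $O_\pi(G) = O_{p_1}(G) \times \cdots \times O_{p_k}(G)$ is nilpotent, and every subgroup of a nilpotent group is subnormal in it, we have $A \su O_\pi(G) \trianglelefteq G$, proving that $A$ is subnormal in $G$.

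I do not foresee any substantive obstacle in this reduction: all the deep content sits inside Theorem A, and what remains is the routine passage from a nilpotent subgroup to its primary components, using only elementary properties of subnormality (inheritance to intermediate subgroups, and the subnormality of arbitrary subgroups of nilpotent groups). The one point that requires a moment's care is the inheritance step in the second paragraph, which is why the argument is organised Sylow-by-Sylow rather than by a direct appeal to Theorem A for the whole of $A$.
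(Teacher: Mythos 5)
Your argument is correct and is exactly the routine deduction the paper intends when it presents Corollary~B as an ``immediate consequence'' of Theorem~A (the paper supplies no separate proof): split $A$ into its Sylow factors, check that $(*)$ passes to each $A_{p_i}$ via normality in $A$ and the intermediate-subgroup property of subnormality, apply Theorem~A primewise, and land inside a normal nilpotent subgroup. One small inaccuracy in your last paragraph: $O_\pi(G)$ is in general strictly larger than $O_{p_1}(G)\times\cdots\times O_{p_k}(G)$ and need not be nilpotent (e.g.\ $O_{\{2,3\}}(\fS_4)=\fS_4$), but this does not affect your conclusion, since $A$ lies in the product $O_{p_1}(G)\cdots O_{p_k}(G)$ itself, which is a normal nilpotent subgroup of $G$ (it sits inside the Fitting subgroup), and every subgroup of a normal nilpotent subgroup is subnormal in $G$.
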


This can be considered a generalisation of the following result due to
H.~Wielandt (see \cite[7.3.3]{LS}):

\begin{thmW}[Wielandt]
 Let $A$ be a subgroup of a finite group $G$. Then the following conditions
 are equivalent.
 \begin{enumerate}
  \item[\rm(i)] $A$ is subnormal in $G$;
  \item[\rm(ii)] $A$ is subnormal in $\seq{A,g}$ for all $g\in G$;
  \item[\rm(iii)] $A$ is subnormal in $\seq{A,A^g}$ for all $g\in G$;
  \item[\rm(iv)] $A$ is subnormal in $\seq{A,A^{a^g}}$ for all $a\in A$,
   $g\in G$.
 \end{enumerate}
\end{thmW}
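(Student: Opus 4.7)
I would first dispatch (i) $\Rightarrow$ (ii) $\Rightarrow$ (iii) $\Rightarrow$ (iv). Subnormality is inherited by any subgroup of $G$ containing $A$: intersecting a subnormal series from $A$ to $G$ with $\seq{A,g}$ produces a subnormal series from $A$ to $\seq{A,g}$, so (i) $\Rightarrow$ (ii). Since $\seq{A,A^g}\le\seq{A,g}$, the same inheritance principle gives (ii) $\Rightarrow$ (iii), and (iii) $\Rightarrow$ (iv) is a trivial specialisation: $a^g$ is simply an element of $G$.

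\textbf{Reduction for the main implication.} For the non-trivial direction (iv) $\Rightarrow$ (i), I would argue by induction on $|G|$ with a minimal counterexample $(G,A)$. Put $N := A^G$, the normal closure, so $N \lhd G$. The hypothesis (iv) is inherited by the pair $(N,A)$: for $a \in A$ and $h \in N$, the assertion $A \su \seq{A, A^{a^h}}$ is an intrinsic statement about a subgroup of $N$ and already holds in $G$. Thus if $N < G$, minimality delivers $A \su N$, and hence $A \su G$ since $N \lhd G$, contradicting the choice of $(G,A)$. So one may assume $A^G = G$.

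\textbf{The heart of the proof.} In the remaining case $A^G = G$, the plan is to leverage Wielandt's classical join theorem: in a finite group, the join of two subnormal subgroups is subnormal in their join. Hypothesis (iv) gives $A \su \seq{A, A^{a^g}}$ for every $a \in A$ and $g \in G$; iterating the join theorem—and using that each conjugate $A^{a^g}$ is itself a subgroup satisfying (iv) in $G$—should yield $A \su K$, where $K$ is the subgroup generated by $A$ together with all the relevant conjugates $A^{a^g}$. The principal obstacle, which is really the content of Wielandt's theorem, is that the allowable exponents $a^g$ sweep only the union of $G$-conjugacy classes that meet $A$, not all of $G$; it is therefore not automatic that the $A^{a^g}$ generate the full normal closure $A^G = G$. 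Overcoming this requires a delicate argument—likely exploiting the $G$-invariance of the exponent set $\{a^g : a \in A, g \in G\}$ in conjunction with the minimality of the counterexample—to force $K = G$, thereby yielding the contradiction $A \su G$ and closing the induction.
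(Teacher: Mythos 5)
This statement is quoted in the paper from \cite[7.3.3]{LS} and not proved there, so there is no in-paper argument to compare against; your proposal must stand on its own. The easy implications (i)$\Rightarrow$(ii)$\Rightarrow$(iii)$\Rightarrow$(iv) via intersecting a subnormal series with an intermediate subgroup are fine, as is the reduction to $A^G=G$ in a minimal counterexample.

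However, the proposal has a genuine gap: the entire content of the hard implication (iv)$\Rightarrow$(i) is deferred. Your "heart of the proof" paragraph ends by conceding that the decisive step "requires a delicate argument" which you do not supply; that concession is precisely where Wielandt's theorem actually lives. Moreover, the strategy you sketch does not work as stated. Wielandt's join theorem says that the join of two subgroups \emph{subnormal in a common ambient group} is subnormal there; hypothesis (iv) only gives $A\su\seq{A,A^{a^g}}$ separately for each exponent, and from $A\su\seq{A,B}$ and $A\su\seq{A,C}$ one cannot conclude $A\su\seq{A,B,C}$ by "iterating the join theorem" --- there is no common overgroup in which both joins are known to be subnormal. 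So even if you could show that the subgroups $A^{a^g}$ generate all of $G$ (the obstacle you do identify), you would still not obtain $A\su G$. The standard proofs instead run through a different mechanism: in a minimal counterexample every proper subgroup containing $A$ has $A$ subnormal in it, and one invokes a zipper-type lemma (a unique maximal subgroup contains $A$) or the machinery of the Wielandt series to derive a contradiction. None of that apparatus appears in your outline, so the proof is incomplete at its essential point.
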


Our proof of Theorem~A makes use of a reduction argument to arrive at a
question about finite almost simple groups and then prove a property of these
groups which may be of independent interest:

\begin{thmB}   \label{thm:simples}
 Let $G$ be a finite almost simple group with simple socle $S$ and $p>2$ be a
 prime dividing $|G|$. Let $A\le G$ be cyclic of order~$p$. Then there exists
 a cyclic $p'$-subgroup $X\le S$ such that
 $$\NN_G^A(X,p)=\varnothing.$$
\end{thmB}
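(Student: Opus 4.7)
Since the conclusion concerns every finite almost simple group, the natural plan is to proceed via the classification of finite simple groups and to handle alternating, sporadic, and Lie-type socles separately. In each case the objective is to exhibit a single cyclic $p'$-subgroup $X\le S$ such that no nontrivial $p$-subgroup of $G$ generated by $G$-conjugates of $A$ is normalised by $X$. A useful preliminary observation is that if $p\nmid|S|$, then for any $p$-subgroup $P$ generated by $G$-conjugates of $A$ one has $[X,P]\le S\cap P=1$, so $X$ normalises $P$ if and only if $X$ centralises~$P$; in that case the problem reduces to finding $X\in S$ whose centraliser in $G$ contains no $G$-conjugate of the generator of $A$.

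For $S$ of Lie type in characteristic $r$, the guiding idea is to take $X$ to be a generator of (a cyclic subgroup of) a suitable maximal torus $T\le S$ of order coprime to $p$. When $p=r$, the Borel--Tits theorem places every nontrivial $p$-subgroup $P\le G$ inside a proper parabolic subgroup and makes $N_G(P)$ parabolic; picking $T$ to be a Coxeter torus (which is not contained in any proper parabolic) immediately yields $X\not\le N_G(P)$. When $p\neq r$, I would appeal to the Brou\'e--Malle--Michel theory of $d$-split Levi subgroups and choose a cyclotomic integer $d$ together with a $\Phi_d$-torus $T\le S$ whose order is divisible by a primitive prime divisor of $q^d-1$: overgroups of a generator of $T$ in $S$ are then constrained to $d$-split Levi subgroups, against which the $A$-generated $p$-subgroups can be checked case by case.

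For $S=\fA_n$, the given $A$ is generated by a product of disjoint $p$-cycles supported on some $\Omega\subseteq\{1,\dots,n\}$ with $|\Omega|$ divisible by $p$. I would choose $X$ to be a cycle (or short product of cycles) of prime length $\ell\neq p$, determined via Bertrand's postulate, and show by a support/orbit analysis on $\{1,\dots,n\}$ that no nontrivial $A$-generated $p$-subgroup can be $X$-invariant. For sporadic $S$, the statement is verified one pair $(S,p)$ at a time using the ATLAS and the known lists of maximal subgroups: for each pair one picks an explicit cyclic $p'$-subgroup $X\le S$ and checks, via the $p$-local maximal subgroups of $G$, that it normalises no $A$-generated $p$-subgroup.

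The principal obstacle will be the non-defining characteristic Lie-type case, where $p$-local subgroups can be large ``subsystem'' subgroups with wreath-product structure and the almost-simple extension $G/S$ may involve graph, field, and graph-field automorphisms putting $A$ into subtle conjugacy classes. A uniform argument will demand a coherent choice of the integer $d$ across all socle types and all outer automorphism extensions, together with careful tracking of how $G$-fusion refines $S$-fusion of cyclic subgroups of order $p$.
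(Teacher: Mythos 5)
Your overall architecture (CFSG, separate treatment of alternating, sporadic and Lie-type socles, Borel--Tits plus a Coxeter torus in defining characteristic, Atlas checks for sporadics) is the same as the paper's, but two of your key steps have genuine gaps. For $S=\fA_n$, taking $X$ to be a single cycle of prime length $\ell\ne p$ cannot work: any conjugate of $A$ supported on the $n-\ell$ points outside the support of $X$ is centralised, hence normalised, by $X$, so $\NN_G^A(X,p)\ne\varnothing$ whenever $n-\ell\ge p$. Bertrand's postulate only guarantees a prime in $(n/2,n)$, and since prime gaps exceed any fixed $p$ infinitely often, you cannot in general find a prime $\ell$ with $n-\ell<p$. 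The element $X$ must act with no (or almost no) fixed points, which forces cycle types such as $(12)(3\ldots n)$, $(2\ldots n)$ or $(123)(4\ldots n)$ with several orbits whose lengths are tuned to $p$; one then shows any $E\in\NN_G(X,p)$ would have to be transitive, indeed regular, on $\{1,\ldots,n\}$, and derives a contradiction from the fact that the unique $\sigma\in E$ sending $1$ to $2$ would have to equal $\sigma^x$. None of this is anticipated by a ``support/orbit analysis'' on a short cycle.

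In the cross-characteristic Lie-type case your sketch is missing the mechanism that actually closes the argument. With a single $\Phi_d$-torus you can at best conclude that $p$ divides $|T\cap S|$ (or that the relevant overgroups are constrained), and ``checked case by case'' is not a proof across all ranks. The paper instead chooses \emph{two} maximal tori $T_1,T_2$ with $\gcd(|T_1\cap S|,|T_2\cap S|)=1$, each containing a Zsigmondy element $s_i$ of order $\ell_i>$ every prime divisor of $|W|$; it shows that for an elementary abelian $p$-group $E$ normalised by $X$ the quotient $N_{\bH}(E)/C_{\bH}(E)$ is a section of the Weyl group (via $C_{\tilde\bH}(\tilde E)$ being a connected subsystem subgroup of maximal rank in the simply connected cover), so each $s_i$ must centralise $E$ and $p$ divides two coprime orders. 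You would also need the separate argument for $\PSL_n(q)$ and $\PSU_n(q)$ when $p$ divides $|Z(\tilde\bH)|$, where the preimage of $E$ can be of symplectic type and one bounds the prime divisors of $|\Sp_{2a}(p)|$ against $\ell_i\ge n$, as well as explicit treatment of the outer classes (regular unipotent elements against diagonal automorphisms, the action of field automorphisms on the chosen torus, and triality for $\OO_8^+(q)$), which your sketch defers to ``careful tracking.''
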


Here, $\NN_G^A(X,p)$ denotes the set of non-trivial $p$-subgroups of $G$
generated by conjugates of $A$ and normalised by $X$.

Our proof is therefore related to (and relies on) the classification of
finite simple groups. It should be noted that for $p=2$ the conclusions of
Theorem~A and Theorem~C are no longer true. In particular condition $(\ast)$
does not imply that $A\leq O_2(G)$. An easy example is reported at the end of
Section~\ref{sec:main}.

In Section~\ref{sec:almost simple} we give, after some preparations, the
proof of Theorem~C, and then in Section~\ref{sec:main} show the reduction of
Theorem~A to the case of almost simple group.\\
We end with Section~\ref{sec:others}, where we analyse similar variations
related to the other criteria for subnormality given by the original Theorem
of Wielandt, namely conditions (iii), better known as the Baer-Suzuki Theorem.
We show that in general these generalisations fail to guarantee the
subnormality of odd $p$-subgroups. For other variations on the Baer--Suzuki
Theorem the interested reader may consult \cite{X}, \cite{GR}, \cite{G1},
\cite{G2}, \cite{GM1} and \cite{GM2}.

\section{Almost simple groups}   \label{sec:almost simple}

\subsection{Notation and preliminary results}

In this section we let $S$ be a non-abelian finite simple group and $G$ any
group such that $S\leq G\leq \Aut(S)$. For $p$ a prime divisor of $|G|$
denote by $\cS_p(G)$ the set of all (possibly trivial) $p$-subgroups of $G$.
For a $p'$-subgroup $X$ of $S$ we denote by $\NN_G(X,p)$ the set of
$p$-subgroups of $G$ normalised by $X$, namely
$$\NN_G(X,p)=\set{Y\in\cS_p(G)\mid X\leq N_G(Y)}.$$
Also for $A\in\cS_p(G)$ set
$$\NN_G^A(X,p):=\set{Y \in \NN_G(X,p)\mid Y \textrm{ is generated by
  $G$-conjugates of } A}.$$
Note that if $A\leq S$, then $\NN_G^A(X,p)\subseteq \NN_S(X,p)$, otherwise
if $A\not\leq S$ then no $E\in \NN_G^A(X,p)$ lies in $S$.


We aim to prove Theorem~C, which we restate:

\begin{thm}   \label{thm:almost-simple}
 Let $G$ be a finite almost simple group with simple socle $S$. Then with the
 same notation as above, for every odd prime $p$ dividing $|G|$ and every
 $A\leq G$ of order $p$, there exists a cyclic $p'$-subgroup $X\leq S$ such
 that $\NN_G^A(X,p)=\varnothing$.
\end{thm}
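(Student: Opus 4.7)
The plan is to proceed by the classification of finite simple groups, running through the possible isomorphism types of the socle~$S$ and, in each case, producing an explicit cyclic $p'$-subgroup $X\le S$ so constrained that it normalises no non-trivial $p$-subgroup generated by $G$-conjugates of~$A$.

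Two preliminary reductions are useful. If $A\le S$, then every $G$-conjugate of $A$ lies in $S$ and hence every $Y\in\NN_G^A(X,p)$ lies in $S$; so it suffices to control $p$-subgroups of~$S$ normalised by~$X$. If $A\not\le S$, then every $G$-conjugate of $A$ has the same non-trivial image in the cyclic group $AS/S$, which forces $p\mid|G/S|$; inspection of $\Out(S)$ across the simple groups shows this restricts us to Lie-type socles admitting a field automorphism of order~$p$. In that case every non-trivial $Y\in\NN_G^A(X,p)$ satisfies $|Y:Y\cap S|=p$, and $X$ also normalises the $p$-subgroup $Y\cap S\le S$, essentially bringing us back to the situation already analysed inside~$S$.

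For $S$ of Lie type in the defining characteristic $p$, I would invoke the Borel--Tits theorem: the $G$-normaliser of any non-trivial $p$-subgroup is contained in a proper parabolic of $G$. Taking $X$ inside a maximal torus of $S$ that lies in no proper parabolic --- for instance a Coxeter torus --- then immediately rules out any non-trivial element of $\NN_G^A(X,p)$. For $S$ of Lie type in characteristic $r\ne p$, I would choose $X$ generated by a Zsigmondy prime element in a suitable maximal torus, chosen so that the only maximal overgroups of $X$ in $S$ are the normaliser of the torus together with possibly a few field-extension subgroups; their structure visibly contains no non-trivial $p$-subgroup normalised by $X$. For $S=\operatorname{Alt}(n)$ with $n\ge 5$, I would take $X$ generated by a $q$-cycle for a suitable prime $q\ne p$ (typically the largest such $q\le n$); a short combinatorial analysis of how $X$ permutes the supports of the generating conjugates of~$A$ shows no such $Y$ can be $X$-invariant. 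For the sporadic simple groups the statement is verified directly from the \textsc{Atlas}, exhibiting a suitable $X$ in each case.

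The main obstacle lies in the small-parameter boundary cases: small-rank Lie-type groups (in particular $\PSL_2(q)$, $\PSL_3(q)$ and $\PSU_3(q)$), small-degree alternating groups $\operatorname{Alt}(n)$ with $n\le 7$, and those small cross-characteristic pairs $(S,p)$ for which no Zsigmondy prime exists. Each of these will require a direct case-by-case argument using the explicit lists of maximal subgroups. A secondary subtlety is the case $A\not\le S$, in which one must carefully compare $X$-invariant $p$-subgroups of~$G$ with those of~$S$ under the action of outer (field) automorphisms of order~$p$.
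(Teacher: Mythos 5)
Your overall architecture (CFSG case division, Borel--Tits plus a Coxeter torus in defining characteristic, Zsigmondy torus elements in cross characteristic, Atlas work for sporadics) is the same as the paper's, but your treatment of the case $A\not\le S$ contains a genuine gap. First, it is not true that an outer element of odd prime order must be a field automorphism: by \cite[2.5.12]{GLS} it may also be a \emph{diagonal} automorphism (for $\PSL_n(q)$, $\PSU_n(q)$, $E_6(q)$, $\tw2E_6(q)$) or a graph/graph-field automorphism of $\OO_8^+(q)$ with $p=3$, and these cases need their own arguments (the paper takes $X$ generated by a regular unipotent element against diagonal automorphisms, and a torus of order dividing $(q^2+1)^2$ against triality). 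Second, your proposed reduction --- that $X$ normalises $Y\cap S$ and so we are ``back inside $S$'' --- does not close the argument: $Y\cap S$ may be trivial, and even when it is not, it is not generated by conjugates of $A$, so the hypothesis $\NN_G^A(X,p)=\varnothing$ says nothing about it; indeed for cross-characteristic Lie type one cannot in general find $X$ with $\NN_S(X,p)=1$. The correct tool is the coprime decomposition $E=[E,X]\,C_E(X)$: since $E$ is generated by conjugates of $A\not\le S$, one gets $C_E(X)\not\le S$, so $X$ centralises a non-trivial $p$-element \emph{outside} $S$, and one must then choose $X$ so that its centraliser in $G$ contains no such element. This is where the actual work for $A\not\le S$ lies, and your sketch does not supply it.

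Two smaller but real issues. In the alternating case, a single $q$-cycle for the largest prime $q\le n$ does not suffice: for $n=10$, $p=3$, a $7$-cycle centralises the $3$-cycle $(8\,9\,10)$, so $\NN^A_G(X,3)\ne\varnothing$ for that choice; one is forced to use products of cycles such as $(1\,2)(3\ldots n)$ or $(1\,2\,3)(4\ldots n)$ and a finer orbit analysis, as the paper does. In cross characteristic your plan also silently skips the case where $p$ divides $|Z(\SL_n(q))|$ or $|Z(\SU_n(q))|$: there the relevant $p$-subgroup lifted to the simply connected cover may be non-abelian of symplectic type, and one must bound the primes dividing $|\Sp_{2a}(p)|$ against the Zsigmondy prime to conclude; this does not follow from a list of maximal overgroups of the torus alone.
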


In \cite{AK}, a similar condition is considered. The authors investigate finite
groups $G$ and primes $p$ that have the following property:
\begin{center}
(R2)\  all nilpotent hyperelementary $p'$-subgroups $X$ of $F^*(G)$
satisfy $\NN_G(X,p)\ne 1$
\end{center}
where a \emph{hyperelementary group} $X$ is one for which $O^q(X)$ is cyclic,
for some prime $q$; basically a nilpotent hyperelementary $p'$-group $X$ is a
direct product of a Sylow $q$-subgroup for some prime $q\ne p$, and a cyclic
$p'$-group. They show (\cite[Thm.~2]{AK}) that the only almost simple
group $G$ satisfying (R2) at a prime $p$ is for $S=\PSL_3(4)$, $p=2$,
and $4$ dividing $|G:S|$.

Note that assumption (R2) implies our assumption. See also \cite{CC} for an
analogous condition and its related subnormality criteria.

\begin{lem}   \label{lem:out}
 In the situation of Theorem~\ref{thm:almost-simple} assume that $A\not\le S$.
 Let $X$ be a non-trivial $p'$-subgroup of $S$ and $E\in \NN^A_G(X,p)$.
 Then $X$ commutes with some non-trivial $p$-element in $G\setminus S$.
\end{lem}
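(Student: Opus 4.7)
The approach is to exploit the coprime action of the $p'$-group $X$ on the $p$-group $E$, together with the hypothesis that $X$ lies in $S$. Three short steps should suffice.

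First, I would establish that the image $\bar E := ES/S \le G/S$ is a \emph{non-trivial} $p$-group. Since $|A|=p$ and $A\not\le S$, the intersection $A\cap S$ is trivial, and the same holds for every $G$-conjugate of $A$; each such conjugate therefore injects into $G/S$. As $E$ is, by definition of $\NN_G^A(X,p)$, generated by $G$-conjugates of $A$, the image $\bar E$ contains elements of order $p$, so $\bar E\neq 1$. At the same time $X\le S$ acts as the identity on the quotient $G/S$, so $X$ fixes $\bar E$ pointwise; that is, $C_{\bar E}(X)=\bar E$.

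Second, I would apply the standard coprime-action lemma to the action of $X$ on $E$. Since $\gcd(|X|,|E|)=1$ and the subgroup $E\cap S$ is normal in $E$ (because $S\lhd G$) and $X$-invariant (because $X\le S\cap N_G(E)$), the lemma yields
$$
C_E(X)\cdot(E\cap S)/(E\cap S)\;=\;C_{\bar E}(X)\;=\;\bar E\;\neq\;1.
$$
In particular $C_E(X)\not\le S$, and any element of $C_E(X)\setminus S$ provides a non-trivial $p$-element of $G\setminus S$ commuting with $X$, as required.

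I do not expect any serious obstacle in this plan: the whole argument boils down to one application of the coprime-action lemma, and the non-triviality of $\bar E$ needed to make that lemma useful is forced directly by the hypothesis $A\not\le S$. The only mild bookkeeping is checking that $E\cap S$ is normal and $X$-invariant, both of which are immediate.
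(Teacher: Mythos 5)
Your proof is correct and rests on the same mechanism as the paper's: a single application of coprime action of the $p'$-group $X$ on the $p$-group $E$, using $X\le S$ to force $C_E(X)\not\le S$ (the paper phrases it as $E=[E,X]C_E(X)$ with $[E,X]\le S$, which is equivalent to your quotient formulation $C_{E/(E\cap S)}(X)=C_E(X)(E\cap S)/(E\cap S)$). No gaps.
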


\begin{proof}
By the coprime action of $X$ on $E$, we have that $E=[E,X]C_E(X)$. As $E$ is
generated by conjugates of $A$ and $A\not\leq S$, we necessarily have that
$C_E(X)\not\leq S$.
\end{proof}

\begin{lem}   \label{lem:normElem}
 In the situation of Theorem~\ref{thm:almost-simple} if
 $\NN_G^A(X,p)\ne\varnothing$ then $X$ normalises a non-trivial elementary
 abelian $p$-subgroup of $S$ or it centralises a non-trivial $p$-element of~$G$.
\end{lem}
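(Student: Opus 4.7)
The plan is to split on whether or not $A$ lies in the socle $S$, since the two cases are handled by essentially disjoint arguments, and the second one has already been carried out in Lemma~\ref{lem:out}.

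Suppose first that $A\le S$. As noted in the paragraph introducing $\NN_G^A(X,p)$, every $E\in\NN_G^A(X,p)$ is then generated by $G$-conjugates of $A$ that all lie in $S$, so $E\le S$. Pick such an $E$; it is a non-trivial $p$-group, so its centre $Z(E)$ is non-trivial, and $\Omega_1(Z(E))$ is a non-trivial elementary abelian $p$-subgroup of $E$. Being characteristic in $E$ it is normalised by $N_G(E)$, and since $X\le N_G(E)$ by the definition of $\NN_G^A(X,p)$, this $\Omega_1(Z(E))\le S$ is an elementary abelian $p$-subgroup of $S$ normalised by $X$, as required.

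Now suppose $A\not\le S$. Then Lemma~\ref{lem:out} applies directly and furnishes a non-trivial $p$-element $y\in G\setminus S$ with $[X,y]=1$, so $X$ centralises the non-trivial $p$-element $y$ of $G$. This concludes the second alternative.

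There is no real obstacle here: the key observation is simply that if $A\le S$ one can feed any $E\in\NN_G^A(X,p)$ into the standard trick of passing to $\Omega_1(Z(E))$, while if $A\not\le S$ the work has already been done in the previous lemma. The only point that wants a moment's care is the remark preceding the lemma that ensures $E\le S$ whenever $A\le S$, which is what lets us conclude that the elementary abelian subgroup produced actually lies in~$S$.
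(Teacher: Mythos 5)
Your proof is correct and uses the same two ingredients as the paper: Lemma~\ref{lem:out} to handle the case $A\not\le S$, and the passage to $\Omega_1(Z(E))$ (together with the remark that $A\le S$ forces $E\le S$) to handle the case $A\le S$. The paper merely phrases the same argument contrapositively, assuming $X$ centralises no non-trivial $p$-element and deducing $A\le S$ from Lemma~\ref{lem:out}, so the two write-ups are essentially identical.
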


\begin{proof}
Let $E$ be a non-trivial $p$-subgroup of $G$ normalised by $X$. If $X$ does
not centralise any non-trivial $p$-element, then $A\le S$ by
Lemma~\ref{lem:out} and hence $E\le S$. Now $X$ also normalises $Z(E)$, and
then also $\Omega_1(Z(E))$, the largest elementary abelian subgroup of
$Z(E)$.
\end{proof}

The following is a well-known consequence of the classification of finite
simple groups and can be found for example in \cite[2.5.12]{GLS}.

\begin{prop}   \label{prop:oddout}
 Let $S$ be non-abelian simple, $S<G\le\Aut(S)$ and assume that
 $x\in G\setminus S$ has odd prime order $p$. Then $S$ is of Lie type and one
 of the following occurs:
 \begin{enumerate}
  \item[\rm(1)] $x$ is a field automorphism of $S$;
  \item[\rm(2)] $x$ is a diagonal automorphism of $S$ and one of
  \begin{enumerate}
   \item[\rm(2.1)] $S=\PSL_n(q)$ with $p|(n,q-1)$,
   \item[\rm(2.2)] $S=\PSU_n(q)$ with $p|(n,q+1)$,
   \item[\rm(2.3)] $p=3$, $S=E_6(q)$ with $3|(q-1)$,
   \item[\rm(2.4)] $p=3$, $S=\tw2E_6(q)$ with $3|(q+1)$; or
  \end{enumerate}
  \item[\rm(3)] $p=3$ and $x$ is a graph or graph-field automorphism of
   $S=\OO_8^+(q)$.
 \end{enumerate}
\end{prop}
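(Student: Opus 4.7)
The plan is to appeal to the Classification of Finite Simple Groups together with the known description of $\Out(S)$ for each non-abelian simple $S$. First I would rule out the alternating and sporadic socles: for $S=A_n$ one has $|\Out(S)|\le 4$, and for every sporadic $S$ the outer automorphism group has order $1$ or $2$. In either case $\Out(S)$ has no element of odd prime order, so no $x\in G\setminus S$ of odd prime order can exist. This forces $S$ to be a simple group of Lie type, establishing the first claim.

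Next, for $S$ of Lie type in characteristic $r$, realised over $\FF_q$ with $q=r^f$, I would use the standard decomposition (as in \cite[2.5.12]{GLS}, following Steinberg)
\[
 \Aut(S)/S\;\cong\;D\rtimes(\Phi\times\Gamma),
\]
where $D$ is the group of diagonal automorphisms, $\Phi$ the group of field automorphisms, and $\Gamma$ the group of graph automorphisms, with the appropriate adaptations for twisted types (where $\Phi$ and $\Gamma$ amalgamate into graph-field automorphisms). The image of $x$ in this quotient is non-trivial and of order $p$, so it must project non-trivially into one of the three factors, and I would treat each possibility separately.

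If the image of $x$ lies in $\Phi$ (or in the graph-field part for twisted groups), then after adjusting by an element of $D$ one may arrange $x$ itself to be a pure field automorphism of odd prime order, giving case~(1); this is possible whenever $p\mid f$. If the image lies in the pure graph factor $\Gamma$, one checks the order of $\Gamma$ case-by-case: $|\Gamma|\in\{1,2\}$ for all untwisted types other than $D_4$, and $\Gamma\cong\fS_3$ only for $S=\OO_8^+(q)$ (triality). Hence the only odd prime that can occur is $p=3$, with $x$ a graph or graph-field automorphism of $\OO_8^+(q)$, which is case~(3). Finally, $p$ divides $|D|$ only in the situations (2.1)--(2.4), because $|D|=\gcd(n,q-1)$ for $\PSL_n(q)$, $|D|=\gcd(n,q+1)$ for $\PSU_n(q)$, $|D|=\gcd(3,q\mp1)$ for $E_6(q)$ and $\tw2E_6(q)$, and $|D|\in\{1,2,4\}$ for all remaining Lie types, so no odd prime divides $|D|$ outside the listed families.

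The main technical nuisance is the bookkeeping for twisted groups, where the extension $\Aut(S)/S$ is not always the naive direct product and graph-field automorphisms must be handled with care: one has to verify that an element of $\Out(S)$ of odd prime order can, up to conjugation in $\Aut(S)$, be represented by a \emph{pure} diagonal, field, or graph/graph-field automorphism as claimed. All of this verification is carried out in full detail in \cite[2.5.12]{GLS}, so the cleanest route is simply to cite that proposition.
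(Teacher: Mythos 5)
Your proposal is correct and takes essentially the same route as the paper: the paper gives no proof of this proposition but simply cites \cite[2.5.12]{GLS}, which is exactly the reference you fall back on, and your sketch (eliminating alternating and sporadic socles via $|\Out(S)|$, then analysing the $D\rtimes(\Phi\times\Gamma)$ decomposition for Lie type) is the standard verification underlying that citation.
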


We prove Theorem~\ref{thm:almost-simple} by treating separately the
cases: $S$ is alternating, sporadic or a simple group of Lie type.

\subsection{The case of alternating groups.}
Throughout the rest of this subsection we assume $S=\fA_n$, with $n\geq 5$ and
$G$ such that $S\leq G\leq \Aut(S)$. For $X$ a cyclic $p'$-subgroup of
$S$ we denote by $E$ any element of $\NN_G(X,p)$. Also, we tacitly
assume that any such $E$ is elementary abelian (see Lemma~\ref{lem:normElem}).

The following elementary result \cite[Lemma~3]{AK} will be used several times.

\begin{lem}   \label{lem:Sym}
 Let $X\leq G\leq \fS_n$ and  $E\in \NN_G(X,p)$. If $E$ acts non-trivially on
 some $X$-orbit $\cO$, then $p$ divides $|\cO|$.
\end{lem}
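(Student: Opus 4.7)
The plan is to pass to the permutation representation on $\mathcal{O}$ and invoke the elementary fact that orbits of a normal subgroup of a transitive group form a block system.

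First I would note that the hypothesis $E\in\NN_G(X,p)$ means $X$ normalises $E$, hence the group $H=\langle X,E\rangle$ acts on $\{1,\ldots,n\}$ with $E\unlhd H$. Since $E$ permutes the $X$-orbits (because $X$ normalises $E$), and by assumption $E$ acts on the particular orbit $\mathcal{O}$, the subset $\mathcal{O}$ is $H$-invariant. Restrict the action of $H$ to $\mathcal{O}$ and let $\bar{X}$, $\bar{E}$ denote the images of $X$, $E$ in $\mathrm{Sym}(\mathcal{O})$. Then $\bar{X}$ is transitive on $\mathcal{O}$, $\bar{E}$ is a normal $p$-subgroup of $\bar{H}=\bar{X}\bar{E}$, and $\bar{H}$ is transitive on $\mathcal{O}$.

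Next, since $\bar{E}\unlhd\bar{H}$, the orbits of $\bar{E}$ on $\mathcal{O}$ are permuted by $\bar{H}$, and the transitivity of $\bar{H}$ forces all $\bar{E}$-orbits to have a common size $m$. On the other hand, $m$ divides $|\bar{E}|$, which is a power of $p$, so $m$ is itself a power of $p$. The hypothesis that $E$ acts non-trivially on $\mathcal{O}$ gives $\bar{E}\ne 1$ with at least one non-singleton orbit, hence (by the uniform size) $m\ge p$. Since $|\mathcal{O}|$ is a sum of $\bar{E}$-orbits of size $m$, we conclude $p\mid |\mathcal{O}|$.

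There is really no serious obstacle here: the only point requiring a moment's care is the interpretation of the hypothesis, namely that "$E$ acts on $\mathcal{O}$" includes $E$ stabilising $\mathcal{O}$ setwise (which is automatic from the fact that $E$ permutes the $X$-orbits since $X$ normalises $E$), and "non-trivially" means the induced action $\bar{E}$ is non-trivial. Everything else is the standard block-system argument for normal subgroups of transitive permutation groups.
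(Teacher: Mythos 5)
Your proof is correct and, at bottom, rests on the same two facts as the paper's: that $X$ is transitive on $\cO$ and that $E$ is normal in $\langle X,E\rangle$. The paper phrases this via fixed points ($\fix_{\cO}(E)$ is $X$-invariant and proper, hence empty, and then $|\cO|\equiv|\fix_{\cO}(E)|\equiv 0 \pmod p$), whereas you phrase it via the block system formed by the $E$-orbits on $\cO$, which must all have the same $p$-power size $m>1$. These are two dressings of the same elementary argument, and both are fine.

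One remark in your write-up is, however, false and should be deleted: it is \emph{not} true that $E$ permutes the $X$-orbits because $X$ normalises $E$. Normality points the other way: it is the orbits of the normal subgroup $E$ that are permuted by $XE$, not the orbits of $X$ by $E$. For instance, in $\fS_3$ take $X=\langle(12)\rangle$ and $E=\langle(123)\rangle$; then $X$ normalises $E$, but $(123)$ sends the $X$-orbit $\{1,2\}$ to $\{2,3\}$. (The same example shows the lemma would be false if ``acts non-trivially on $\cO$'' were read as merely ``moves some point of $\cO$'': here $3\nmid|\{1,2\}|$.) So the setwise invariance of $\cO$ under $E$ is not automatic --- it must be, and clearly is intended to be, part of the hypothesis. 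Once you take it as such, $\cO$ is invariant under $H=XE$ for the trivial reason that it is both an $X$-orbit and $E$-stable, and the rest of your argument goes through unchanged.
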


\begin{proof}
As $X$ acts on $\fix_{\cO}(E)$, $\fix_{\cO}(E)=\varnothing$ and
$|\cO|\equiv |\fix_{\cO}(E)|\equiv 0$ (mod $p$).
\end{proof}

\begin{prop}   \label{prop:Alt}
 If $\NN_G(X,p)\ne1$ for every cyclic $p'$-subgroup $X$ of $S$, then $S=\fA_6$
 and $(G,p)\in\set{(\PGL_2(9),2),(\Aut(\fA_6),2)}$.
 In particular for every odd prime $p$ and every subgroup $A$ of order $p$
 there exists a cyclic $p'$-subgroup $X$ for which $\NN^A_G(X,p)=\varnothing$.
\end{prop}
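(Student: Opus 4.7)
Since $p$ is odd, every element of $\Aut(\fA_n)$ of order $p$ lies in $S=\fA_n$: for $n\neq 6$ we have $\Aut(\fA_n)=\fS_n$, and any element of odd prime order is a product of odd-length cycles, hence of sign $+1$; while $\Aut(\fA_6)/\fA_6$ has exponent $2$, so no odd-order element lies outside $\fA_6$. Consequently $A\le S$, and the observation following the definition of $\NN_G^A(X,p)$ gives $\NN_G^A(X,p)\subseteq \NN_S(X,p)$. It therefore suffices to exhibit, for each $n\ge 5$ and each odd prime $p\le n$, a cyclic $p'$-subgroup $X\le\fA_n$ with $\NN_S(X,p)=1$.

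\textbf{Orbit criterion.} Let $E\in\NN_S(X,p)$. Since $E$ normalises $X$, $E$ permutes the $X$-orbits on $\{1,\dots,n\}$. Suppose the cycle type of $X$ satisfies
\begin{enumerate}
\item[(a)] every cycle length is coprime to $p$, and
\item[(b)] every cycle length has multiplicity strictly less than $p$.
\end{enumerate}
Then on the set of $X$-orbits of a fixed size $s$, of cardinality $m_s<p$, every $E$-orbit has length a $p$-power $\le m_s<p$, hence equals $1$; so $E$ fixes every $X$-orbit setwise. Lemma~\ref{lem:Sym} combined with (a) now forces $E$ to act trivially on each orbit, so $E=1$ and $\NN_S(X,p)=1$.

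\textbf{Construction of $X$.} The generic choice is an $m$-cycle of $\fA_n$, where $m$ is an odd integer in $[n-p+1,n]$ coprime to $p$; its cycle type consists of one cycle of length $m$ together with $n-m<p$ fixed points, fulfilling (a) and (b), and its sign is $(-1)^{m-1}=+1$. Such an $m$ exists in all cases except the family $p=3$, $n\equiv 4\pmod{6}$, where the unique odd integer in $\{n-2,n-1,n\}$ is $n-1$, which is divisible by $3$. For this family I take instead $X$ to be the disjoint product of an $(n-2)$-cycle and a transposition; its cycle type $(n-2,2)$ has both lengths coprime to $3$ (since $n-2\equiv 2\pmod 3$) and both multiplicities equal to $1$, and its sign is $(-1)^{n-3}\cdot(-1)=(-1)^{n-2}=+1$, so $X\in\fA_n$.

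\textbf{Main obstacle.} The argument is an explicit case check; the real friction lies in simultaneously satisfying (a), (b), and the parity condition needed to stay inside $\fA_n$. The one troublesome family $p=3$, $n\equiv 4\pmod 6$ requires the above switch to a two-cycle support, while all other cases reduce to verifying that among the odd integers in $[n-p+1,n]$ one finds a member coprime to $p$, which is immediate because this interval contains at most one odd multiple of $p$ and at least two odd integers whenever $p\ge 5$.
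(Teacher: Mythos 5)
Your reduction to the case $A\le S$ and to exhibiting a cyclic $p'$-subgroup $X\le\fA_n$ with $\NN_S(X,p)=1$ is sound for odd $p$ (and matches what the paper does implicitly), but the engine of your proof, the ``orbit criterion,'' rests on a reversal of the hypothesis. An element $E\in\NN_S(X,p)$ satisfies $X\le N_S(E)$, i.e.\ $X$ normalises $E$; it does \emph{not} follow that $E$ normalises $X$, and hence $E$ need not permute the $X$-orbits at all. The correct consequence is the opposite one: $X$ permutes the $E$-orbits, and $\fix(E)$ is a union of $X$-orbits. Your criterion is false as a general principle: for $p=3$ the subgroup $X=\seq{(1\,2)}$ has cycle type $(2,1,1)$ satisfying your conditions (a) and (b), yet it normalises $E=\seq{(1\,2\,3)}$, and this $E$ visibly does not preserve the $X$-orbits $\set{1,2},\set{3},\set{4}$. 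Since every case of your construction is discharged by appealing to this criterion, the proof as written establishes nothing.

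What survives of your idea is only the weaker statement that $\fix(E)$, being $X$-invariant, is a union of $X$-orbits, so $|{\operatorname{supp}}(E)|$ is simultaneously a sum of a subfamily of the cycle lengths of $X$ and a positive multiple of $p$. For your generic choice (an $m$-cycle with $r=n-m<p$ fixed points, $p\nmid m$) this does not yield a contradiction whenever $p\mid m+j$ for some $1\le j\le r$ (e.g.\ $n=6$, $p=3$, $m=5$, or $n=13$, $p=3$, $m=11$): one must then exclude a non-trivial $p$-group $E$ whose support is the big $X$-orbit together with some fixed points, and in particular a \emph{transitive} $p$-group on $p^k$ points normalised by $X$. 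Ruling out exactly such transitive/regular configurations is the hard core of the paper's proof (the cases $n=2^r$ and $n=p^r$, handled there by explicit commutator and cycle-shape computations with a carefully chosen $X$ of type $(2,n-2)$ or $(3,n-3)$ or $(n-2,1,1)$), and your write-up contains no substitute for it. Separately, you prove only the ``in particular'' clause: the first assertion of the proposition is a classification over \emph{all} primes, including the identification of the two genuine exceptions at $p=2$ for $S=\fA_6$, which you do not address.
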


\begin{proof}
The cases $n=5$ and $n=6$, with $G\leq \fS_6$, follow quite immediately by
taking, as subgroup $X$ a Sylow $5$-subgroup, if $p=2$ or $p=3$, and a cyclic
subgroup of order $3$ if $p=5$. Assume that $n=6$ and $G\not\leq \fS_6$.
As $|G:\fA_6|$
is either $2$ or $4$, if $p$ is odd then $\NN_G(X,p)=\NN_{\fA_6}(X,p)$,
for every $X\leq \fA_6$. Therefore, by what we have just proved, there is some
cyclic $p'$-subgroup $X$ of $\fA_6$ for which $\NN_G(X,p)=1$. Let $p=2$.
The group $G=M_{10}$ contains no elements of order 10 (see \cite{Atlas}).
We take $X$ a cyclic subgroup of order $5$ of $G$. Note that
if $E$ is any $2$-subgroup of $G$ normalised by $X$, then $E=[E,X]$ and so
$E$ lies in $\fA_6$, but then $\NN_G(X,2)=\NN_{\fA_6}(X,2)=1$. Let now
$G=\PGL_2(9)$ or $G=\Aut(\fA_6)$. Then every element of odd order normalises a
non-trivial $2$-subgroup of $G$, basically the elements of order~$3$ normalise
a copy of $\fA_4$ lying in $\fA_6$, while the elements of order $5$ centralise
always an outer involution (see \cite{Atlas}). Therefore we have that
$\NN_G(X,2)\ne 1$ for every cyclic odd order subgroup $X$ of $G$.

\noindent
Assume for the rest of the proof that $n\geq 7$ and argue by contradiction.
We treat separately the two cases: 1) $n$ is even and 2) $n$ is odd.

\smallskip\noindent
{\bf Case 1}. $n$ is even.

\noindent
Assume first that $p\vert (n-1)$.
In particular $p$ is odd. We take as cyclic $p'$-subgroup $X$ of $\fA_n$
the one generated by $x=(12)(3\ldots n)$. Let $E$ be a non-trivial element of
$\NN_G(X,p)$. Now the set $\set{1,2}$ cannot lie in $\fix(E)$, otherwise $E$
acts non-trivially on $\set{3,\ldots,n}$, and by Lemma \ref{lem:Sym} we
would have that $p$ divides $n-2$, which is not the case being a divisor
of $n-1$. Also from the fact that $E=E^x$, it follows that none of $1$
and $2$ are fixed by $E$. But then, as $p>2$, $EX$ is transitive on
$\set{1,2,\ldots,n}$, and since $p$ does not divide $n$
we have a contradiction.

Assume now that $p\nmid (n-1)$.\\
We choose first $X=\seq{x}$ with $x=(2\ldots n)$ and let $1\ne E\in\NN_G(X,p)$.
By Lemma \ref{lem:Sym} we have that $E$ does not fix $1$.
Then $EX$ is transitive on $\set{1,2,\ldots,n}$. In particular we have that $n$
is a power of $p$ and since it is even $p=2$. Say $n=2^r\geq 8$.\\
We can now change our testing subgroup $X=\seq{x}$ and choose
now $x=(123)(4\ldots n)$. This is  of course a cyclic $p'$-subgroup of $\fA_n$.
Let $E$ be a non-trivial elementary abelian $2$-subgroup lying in $\NN_G(X,2)$.
Note that $E$ acts fixed-point-freely on $\set{1,2,\ldots,n}$. Indeed if $E$
fixes a point, then as $X$ normalises $E$, we have that either $\set{1,2,3}$ or
$\set{4,\ldots, n}$ lie in $\fix(E)$. In any case we reach a contradiction with
Lemma \ref{lem:Sym}. Now we claim that $E$ is transitive. Let $\cO$ be the
$E$-orbit containing $1$. Then $\cO$ cannot be contained in $\set{1,2,3}$,
otherwise $E$ being a $2$-group, there will be a fixed point of $E$ in
$\set{1,2,3}$, which is not the case.
Let therefore $e\in E$ be such that $1e=i\in\set{4,\ldots,n}$.
The subgroup $\seq{x^3}$ is transitive on $\set{4,\ldots,n}$, as $3$ is coprime
to $n-3=2^r-3$, thus for every $j\in\set{4,\ldots,n}$ we may take some
$y\in\seq{x^3}$ such that $iy=j$. But then
$$1(e^y)=1(y^{-1}ey)=1(ey)=i(y)=j$$
and since $e^y\in E$ the element $j$ lies in $\cO$. In particular we have
proved that $\set{4,\ldots,n}\subseteq \cO$ and, since $n-2=2^r-2$ is not a
power of 2 as $n\geq 8$, we have that $\cO=\set{1,2,\ldots, n}$ and $E$ acts
regularly on it. Now let $e$ be the unique element of $E$ that maps $1$
to $2$, then $e^x$ maps $2$ to $3$. Now as $[e,e^x]=1$ we have that
$$1(ee^x)=2(e^x)=3=1(e^xe)$$
which means that $1(e^x)=3(e^{-1})$, and so $1e^x\not\in \set{1,2,3}$.
If we set $1e^x=j$, for some $j\in\set{4,\ldots,n}$, we reach a contradiction,
since
$$e=(12)(3,j)\ldots \quad {\textrm{and}} \quad e^x=(23)(1,j)\ldots $$
but also as $n\geq 8$, $jx\ne j$ and so
$e^x=(1x,2x)(3x,jx)\ldots=(23)(1,jx)\ldots$.

\smallskip\noindent
{\bf Case 2}. $n$ is odd.

\noindent
In this situation we have that $p\vert n$. Indeed if this is not the case,
we take $X$ the subgroup generated by an $n$-cycle of $\fA_n$. Now any
$1\ne E\in\NN_G(X,p)$ acts non-trivially on $\set{1,2,\ldots,n}$, and
therefore we reach a contradiction to Lemma~\ref{lem:Sym}. Thus $p\vert n$;
in particular $p$ is odd.\\
We take now $x$ the $(n-2)$-cycle $(3,4,\ldots, n)$ and $X=\seq{x}$.
Then any $1\ne E\in\NN_G(X,p)$ does not fix both $1$ and
$2$, otherwise by Lemma \ref{lem:Sym}, $p\vert (n-2)$ which is not the case as
$p\vert n$ and $p$ is odd. Assume that $1$ is not fixed by $E$ (otherwise
argue considering $2$ in place of $1$) and let $\cO_1$ be the $E$-orbit
containing $1$. Since $p$ is odd there is some $e\in E$ such that $1e=i$ for
some $i\in\set{3,\ldots,n}$. Now as $X$ is transitive on  $\set{3,\ldots,n}$,
for every $j\in\set{3,\ldots,n}$ there is some power $m$ of $x$ such
$ix^m=j$. But then
$$1(e^{x^m})=1(x^{-m}ex^m)=1(ex^m)=i(x^m)=j,$$
and as $e^{x^m}\in E$ we have proved that $\set{3,\ldots, n}\subseteq\cO_1$.
Since $p\nmid n-1$ we conclude that $E$ is transitive on
$\set{1,\ldots,n}$. We show now that $E$ is regular. The stabiliser
$\Stab_E(1)$ is normalised by $X$, and therefore if this is non-trivial, then
by Lemma~\ref{lem:Sym} we reach the contradiction $p\vert (n-2)$. It follows
that $E$ is regular on $\set{1,2,\ldots,n}$ and so $n=|E|=p^r$, and any
non-trivial element $\sigma$ of $E$ is a product of exactly $p^{r-1}$ cycles
of length $p$. In particular there exists a unique $\sigma\in E $ which
maps $1$ to $2$. We write
$$\sigma=\sigma_1\sigma_2\cdots \sigma_{p^{r-1}}$$
with $\sigma_1=(12u_3\ldots u_p)$, a $p$-cycle. Since $\sigma^x\in E$
maps $1$ to $2$, we necessarily have that $\sigma=\sigma^x$, but this is not
the case as $\sigma_1^x=(12u_3x\ldots u_px)\ne (12u_3\ldots u_p)=\sigma_1$.
\end{proof}

%
%
\subsection{The case of sporadic groups.}   \label{subsec:spor}
We assume now that $S$ is one of the 27 sporadic simple groups (including the
Tits simple group $\tw2F_4(2)'$), and $S\leq G\leq \Aut(S)$. As before $X$
will denote a cyclic $p'$-subgroup of $S$ and $E$ a non-trivial elementary
abelian $p$-subgroup of $G$ normalised by $X$. Our basic reference for
properties of sporadic groups is \cite{Atlas}.

\begin{prop}   \label{prop:Spor}
 Let $S$ be a simple sporadic group, $S\leq G\leq \Aut(S)$ and $p$ a prime.
 Then there exists a cyclic $p'$-subgroup $X$ of $S$ such that $\NN_G(X,p)=1$.
 In particular, for every odd prime $p$ and every subgroup $A$ of $G$ of
 order $p$, there exists a cyclic $p'$-subgroup $X$ such that
 $\NN^A_G(X,p)=\varnothing$.
\end{prop}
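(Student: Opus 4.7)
The plan is to prove the proposition by direct case-by-case inspection using the ATLAS~\cite{Atlas}, running through each of the 27 sporadic simple groups $S$, each almost simple $G$ with $S\le G\le\Aut(S)$, and each prime $p$ dividing $|G|$.

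First I would reformulate as follows. If $\NN_G(X,p)\ne 1$ then, taking any non-trivial $P\in\NN_G(X,p)$, one has $X\le N_G(P)$, which is a $p$-local subgroup of $G$. Hence to prove the proposition it suffices to exhibit a cyclic $p'$-subgroup $X=\langle x\rangle\le S$ such that no $G$-conjugate of $\langle x\rangle$ is contained in any proper $p$-local subgroup of $G$. A convenient sufficient condition is that $|x|$ fails to divide the order of any maximal $p$-local subgroup of $G$; when this is too strong one looks more carefully at which $G$-conjugacy classes of elements of order $|x|$ intersect each local subgroup. Since the maximal subgroups of every sporadic almost simple group, together with their orders and the element orders they contain, are tabulated in the ATLAS, this becomes a finite verification.

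The natural first attempt for each pair $(S,p)$ is to let $x\in S$ have order equal to the largest prime $r$ dividing $|S|$. In the majority of cases $r$ divides only the order of the self-normaliser $N_G(\langle x\rangle)$ (which is not $p$-local for $p\ne r$) and perhaps one or two other non-$p$-local maximals, so $X=\langle x\rangle$ succeeds for every $p\ne r$. When this first attempt fails---either because $p=r$, or because $r$ happens to divide the order of some maximal $p$-local subgroup of $G$, as occurs for certain larger sporadic groups at small primes---one instead picks a $p'$-element of another prime order, or of order a product of two distinct primes, and verifies from the ATLAS that no $G$-conjugate of its cyclic subgroup is contained in any $p$-local subgroup. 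The main difficulty is therefore organisational rather than conceptual: a systematic tabulation over all 27 groups and their outer automorphism extensions. The final ``In particular'' statement is immediate, since $\NN_G^A(X,p)\subseteq\NN_G(X,p)\setminus\{1\}=\varnothing$ as soon as $\NN_G(X,p)=1$ has been established.
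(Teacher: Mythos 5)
Your strategy coincides in essence with the paper's: both proofs proceed group-by-group through the ATLAS, take for $X$ a cyclic subgroup generated by an element of large prime order (usually the largest prime $r$ dividing $|S|$), observe that the maximal subgroups of $G$ whose order is divisible by $r$ admit no non-trivial $p$-subgroup normalised by such an $X$, and then switch to a second element order to cover $p=r$ (and occasionally a third choice, or a composite order such as $33$ for $Co_1$ at $p=2$); the ``in particular'' clause is disposed of identically. One small imprecision: the ATLAS tabulates maximal subgroups, not maximal $p$-local subgroups, so the workable form of your sufficient condition is to list the maximal subgroups of order divisible by $|x|$ and recurse into those, exactly as the paper does for $Suz$, $M^cL$, $Co_1$ and $Fi_{22}$.

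The genuine gap is your premise that ``the maximal subgroups of every sporadic almost simple group \dots are tabulated in the ATLAS''. This fails for $Fi_{24}'$, $B$ and $M$, for which the ATLAS contains only partial lists, so for these three groups the finite verification you describe cannot be carried out from the cited source. The paper covers them by a different argument: by Lemma~\ref{lem:normElem} one may assume $X$ normalises a non-trivial \emph{elementary abelian} $p$-subgroup $E$, and a faithful action of a cyclic group of order $r$ (with $r=29,47,59$ respectively) on $E$ forces $r$ to divide $|\GL_a(p)|$ with $p^a$ bounded by the $p$-part of $|S|$, which the order formulas exclude --- except for $B$ with $p=2$, where one additionally needs that the $2$-rank of $B$ is $14$ (Lawther), too small for a faithful $C_{47}$-action. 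Since elements of order $r$ are self-centralising in these groups, this settles all $p\ne r$, and $p=r$ is excluded because subgroups of order $r$ are not normalised by elements of order $23$, $31$, $71$ respectively. Without some such supplementary argument your plan does not close for these three groups.
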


\begin{proof}
We extend a little our notation. Given a prime $p$ and a positive integer $q$
coprime to $p$, we write $\NN_G(q,p)$ for the set of $p$-subgroups of $G$
that are normalised by some cyclic $q$-subgroup of $S$.

Table~\ref{tab:spor} summarises the situation for the sporadic groups and
their automorphism groups. For every group $S$, we list a pair $(q;r)$ of
primes such that $\NN_G(q,p)=1$ for all $p\ne q$, and $\NN_G(r,q)=1$.
For four groups, $\NN_G(q,p)\ne1$ for another prime $p\ne q$, in which case
either $\NN_G(r,p)=1$, or we give a further integer $s$ such that
$\NN_G(s,p)=1$. Our choice of $(q;r)$, respectively $(q;r;s)$ works for both
$S$ and $\Aut(S)$.

\noindent
\begin{table}[htbp]
\caption{The case of sporadic groups.} \label{tab:spor}
$\begin{array}{|lll|lll|ll|}\hline
M_{11} & (11; 3)  &&  Co_{3} &  (23; 7)  &&  B      &  (47; 31) \\
M_{12} & (11; 3)  &&  Co_{2} &  (23; 5)  &&  M        & (59; 71)  \\
M_{22} & (11; 3)  &&  Co_{1}& (23\,(p\ne 2);33\,(p=2);13)&& J_1 &  (19; 11) \\
M_{23} & (23; 5)  &&  He     &  (17; 7)  &&  O'N      &  (31; 19) \\
M_{24} & (23; 5)  &&  Fi_{22}&  (13\,(p\ne 3); 11)&&  J_3    &  (19; 17) \\
J_{2}  & (7; 5)   &&  Fi_{23}&  (23\,(p\ne 2); 17)&&  Ly       &  (67; 37) \\
Suz    & (13; 11) &&  Fi_{24}'&  (29; 23)&&  Ru       &  (29; 13) \\
HS     & (11; 3)  &&  HN     &  (19; 11) &&  J_4      &  (43; 37) \\
M^cL   & (11\,(p\ne 2); 7) &&  Th& (31\,(p\ne 2); 19) && \tw2F_4(2)' & (13; 5) \\
     \hline
\end{array}$\\
\end{table}

\noindent
We prove the validity of Table 1 by considering the individual groups in turn.

The groups $S=M_{11}$, $J_1$, $J_2$, $M_{23}$, $M_{24}$, $Co_3$, $Co_2$, $Ru$,
$Ly$, $J_4$, $Fi_{23}$ have trivial outer automorphism group. The validity
of our claim is immediate from the known lists of maximal subgroups
\cite{Atlas}.

For $S=M_{12}$, $M_{22}$, $HS$, $He$, $J_3$, $O'N$, $HN$, $Th$, $\tw2F_4(2)'$
we have $|\Out(S)|=2$. For $G=S$ we can argue as before, while for $G=\Aut(S)$
we invoke Lemma~\ref{lem:out} for a suitable subgroup $X\le S$ of prime order
as listed in Table~\ref{tab:spor}.
We deal with the remaining groups in some more detail.

$S=Suz$. Here $|\Out(S)|=2$.
The maximal subgroups of $S$ of order divisible by 13 are isomorphic to
$G_2(4)$, $\PSL_3(3)\co2$ or $\PSL_2(25)$. As these have no elements of
order~11, we immediately obtain $\NN_{Suz}(5,11)=1$. Moreover, for any of
these groups, a Sylow $13$-subgroup does not normalise any other $p$-subgroup,
for $p\ne 13$. Thus $\NN_{Suz}(13,p)=1$ for every prime $p\ne 13$. Finally
the outer involutions do not centralise any element of order $13$, forcing
the same conclusions for $\Aut(Suz)$.

$S=M^cL$. Here $|\Out(S)|=2$.
The maximal subgroups of $S$ of order divisible by 11 are isomorphic to
$M_{11}$ and $M_{12}$. Therefore $\NN_{M^cL}(7,11)=\NN_{M^cL}(11,p)=1$ for
every $p\ne 11$. Now consider $G=\Aut(M^cL)$. Again, $X$ of order~11 shows that
there are no examples except possibly when $p=2$. In the latter case for $X$ we
take a cyclic subgroup of order~7. The Atlas \cite{Atlas} shows that
$C_G(X)\le S$, and so there can be no example for $G$ by Lemma~\ref{lem:out}.

$S=Co_1$. Here $\Out(S)=1$.
The maximal subgroups of $S$ of order divisible by 23  are isomorphic to
$Co_2$, $2^{11}\co M_{24}$ or $Co_3$. Since these groups have no elements of
order 13, we obtain that $\NN_{Co_1}(13,23)=1$. Moreover if $Y$ is any of
these maximal subgroups $\NN_{Y}(23,p)=1$, for every $p$ different from~23
and~2. Finally $Co_1$ has elements of order $33$ which are auto-centralising.
As $Co_1$ and $Co_2$ do not contain elements of order $33$, the unique maximal
subgroups of $S$ that have such an element are: $\PSU_6(2)\co\fS_3$,
$3^6\co2M_{12}$ and $3^{\cdot}Suz\co2$. Now, a cyclic subgroup of order $33$ in
$\PSU_6(2)\co\fS_3$ does not lie completely in $\PSU_6(2)$; therefore, if such
a subgroup normalises a non-trivial $2$-subgroup, then, since $S$ has no
elements of order $66$, we should have that $\NN_{\PSU_6(2)}(11,2)\ne 1$.
This is not the case as in $\PSU_6(2)$ the maximal subgroups of order
divisible by $11$ are $M_{22}$ and $\PSU_5(2)$. Consider now a cyclic subgroup
$Y$ of order $33$ inside $3^6\co2M_{12}$. This is the direct product of a
subgroup of order $3$ in $3^6$ by a Sylow $11$-subgroup of $2M_{12}$.
Assume that $X$ is a non-trivial $2$-subgroup of $3^6\co2M_{12}$ normalised by
$Y$. Then $X\in \NN_{2M_{12}}(11,2)$, and since $\NN_{M_{12}}(11,2)=1$ we
deduce that $X$ is centralised by a Sylow $11$-subgroup of $M_{12}$, and
thus by the whole $Y$, which is a contradiction since in $S$ there are no
elements of order $66$. Finally, a similar argument shows that if $X$ is a
non-trivial element of $\NN_{3^{\cdot}Suz:2}(33,2)$, then $X\cap Suz$ is a
non-trivial element of $\NN_{Suz}(11,2)$. This is impossible since the
maximal subgroups of $Suz$ of order divisible by $11$ are: $\PSU_5(2)$,
$3^5\co M_{11}$ and $M_{12}\co2$, forcing $\NN_{Suz}(11,2)=1$.

$S=Fi_{22}$. Here $|\Out(S)|=2$.
The maximal subgroups of $S$ of order divisible by 13 are isomorphic to
$\tw2F_4(2)$ or $\OO_7(3)$. Since both these  groups have orders not divisible
by $11$, we have that $\NN_{S}(11,13)=1$. Now, $\NN_{\tw2F_4(2)}(13,p)=1$ for
every $p\ne 13$, since the maximal subgroups of $\tw2F_4(2)$ containing a
Sylow $13$-subgroup are $\PSL_2(25)$ and $\PSL_3(3)\co2$ and $C_S(13)=13$.
In $\OO_7(3)$ there are three isomorphism classes of maximal subgroups of
order divisible by 13, namely $G_2(3), \PSL_4(3)\co2$ and $3^{3+3}\co\PSL_3(3)$.
We have that $\NN_{\OO_7(3)}(13,p)=1$ if $p\ne 3$ (and $p\ne 13$), forcing
$\NN_{S}(13,p)=1$ for every prime $p$ different from $3$ and $13$. To deal
with the case $p=3$, we look at the maximal subgroups of $S$ of order
divisible by~11. These are isomorphic to one of the following: $M_{12}$,
$2^{10}\co M_{22}$ and $2\udot \PSU_6(2)$. For any of these groups $Y$ we have
$\NN_{Y}(11,3)=1$, thus the same happens in $S$. Since $|\Out(S)|=2$, we
only need to show that $\NN_{\Aut(S)}(q,2)=1$ for some odd integer $q$. This
is guaranteed by the fact that $\NN_{S}(13,2)=1$ and $C_{\Aut(S)}(13)=13$.

For the last three groups, the Atlas does not contain complete lists of
maximal subgroups, so we need to give a different argument.

$S=Fi_{24}'$. Here $|S|=2^{21}\cdot 3^{16} \cdot 5^2 \cdot 11\cdot 13\cdot 17\cdot 23\cdot 29$, $|\Out(S)|=2$.
Here, a subgroup of order~29 cannot act faithfully on an elementary abelian
$p$-subgroup for $p\ne29$, by the order formula. On the other hand, subgroups
of order~29 are not normalised by elements of order~23.

$S=B$. Here $|S|=2^{41}\cdot 3^{13} \cdot 5^6 \cdot 7^2\cdot 11\cdot 13\cdot 17 \cdot 19\cdot  23\cdot 31 \cdot 47$,
$\Out(S)=1$.
>From the order formula it is clear that a subgroup of order~47 cannot act
non-trivially on an elementary abelian $p$-subgroup of $S$, except
possibly for $p=2$. Since elements of order~47 are self-centralising, and
not normalised by an element of order~31, we must have $p=2$. But the 2-rank of
$S$ is~14 by \cite{La07}, too small for an action of $C_{47}$.

$S=M$. Here $|S|=2^{46}\cdot 3^{20}\cdot 5^9\cdot 7^6\cdot 11^2\cdot 13^2
\cdot 17\cdot 19\cdot 23\cdot 29\cdot 31\cdot 41\cdot 47\cdot 59\cdot 71$,
$\Out(S)=1$.
A subgroup of order~59 cannot act faithfully on an elementary abelian
$p$-subgroup for $p\ne59$, by the order formula. On the other hand, subgroups
of order~59 are not normalised by elements of order~71.
\end{proof}

\subsection{Classical groups of Lie type}
We consider the following setup. Let $S$ be a finite simple group of Lie type.
There exists a simple linear algebraic group $\bH$ of adjoint type defined
over the algebraic closure of a finite field and a Steinberg endomorphism
$F:\bH\rightarrow\bH$ such that the finite group of fixed points $H=\bH^F$
satisfies $S=[H,H]$.

We now make use of the fact that groups of Lie type possess elements of
orders which cannot occur in their Weyl group, and with small centraliser.
These can be found, for example, in the Coxeter tori. For this we need the
existence of \emph{Zsigmondy primitive prime divisors}
(see \cite[Thm.~3.9]{He74}):

\begin{lem}   \label{lem:Zsig}
 Let $q$ be a power of a prime and $e>2$ an integer. Then unless $(q,e)=(2,6)$
 there exists a prime $\ell$ dividing $q^e-1$, but not dividing $q^f-1$ for any
 $f<e$, and $\ell\ge e+1$.
\end{lem}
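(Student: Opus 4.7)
The plan is to derive this classical Zsigmondy-type result from properties of cyclotomic polynomials. First, observe that the lower bound $\ell\ge e+1$ is automatic once existence is established: if $\ell$ is a prime dividing $q^e-1$ but not $q^f-1$ for any $f<e$, then the multiplicative order of $q$ modulo $\ell$ equals $e$ exactly, and this order divides $\ell-1$ by Fermat's little theorem, forcing $\ell\equiv 1\pmod{e}$. Hence the whole task reduces to proving existence of a primitive prime divisor.

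For this I would use the factorisation
$$q^e-1=\prod_{d\mid e}\Phi_d(q),$$
where $\Phi_d$ is the $d$-th cyclotomic polynomial. A prime $\ell$ is a primitive prime divisor of $q^e-1$ precisely when $\ell\mid\Phi_e(q)$ and $\ell\nmid e$. The standard lemma, proved by a short calculation with $\Phi_e$ modulo $\ell$, states that a non-primitive prime divisor $\ell$ of $\Phi_e(q)$ must divide $e$, and then appears in $\Phi_e(q)$ with multiplicity exactly one. Consequently the non-primitive part of $\Phi_e(q)$ divides $e$ and is in particular bounded above by~$e$, so existence of a primitive prime divisor follows as soon as $\Phi_e(q)>e$.

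To obtain such a size bound I would write $\Phi_e(q)=\prod_{\zeta}(q-\zeta)$, with $\zeta$ ranging over primitive $e$-th roots of unity in $\mathbb{C}$. Pairing conjugate pairs and using $|q-\zeta|^2=q^2-2q\cos\theta+1\ge(q-1)^2$ yields the crude inequality $\Phi_e(q)\ge(q-1)^{\varphi(e)}$. For $q\ge3$ this exceeds the largest prime divisor of $e$ for all but a handful of very small $e$, each of which is then settled by a direct evaluation of $\Phi_e(q)$.

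The real obstacle is the case $q=2$, where the crude estimate collapses to $1$. I would sharpen it by retaining the genuine factors $|2-\zeta|^2=5-4\cos\theta$, whose geometric mean is strictly greater than~$1$, producing an effective lower bound growing with $\varphi(e)$; combined with a direct tabulation of $\Phi_e(2)$ for small~$e$, this pins down the unique failure at $(q,e)=(2,6)$, where $\Phi_6(2)=3$ equals the largest prime divisor of~$e$ and is indeed non-primitive because $3\mid 2^2-1$. All other values of $e>2$ give $\Phi_e(2)$ strictly larger than every prime dividing~$e$, completing the argument.
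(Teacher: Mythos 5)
The paper itself gives no proof of this lemma: it is quoted directly from Hering \cite[Thm.~3.9]{He74} (ultimately Zsigmondy's theorem), so there is no in-paper argument to compare against. Your proposal is the classical self-contained route via cyclotomic polynomials, and its structural reductions are sound: the bound $\ell\ge e+1$ does follow from $\mathrm{ord}_\ell(q)=e$ together with $e\mid \ell-1$; it suffices to test $f\mid e$ since $\ell\mid q^f-1$ implies $\ell\mid q^{\gcd(f,e)}-1$; and the standard lemma that a prime divisor of $\Phi_e(q)$ is either primitive or equal to the largest prime factor $P(e)$ of $e$, occurring then to the first power (valid here because $e>2$), correctly reduces everything to the single inequality $\Phi_e(q)>P(e)$.

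The one place where the sketch does not yet constitute a proof is the decisive estimate for $q=2$, which is exactly where the content of Zsigmondy's theorem lives. The observation that each factor $|2-\zeta|^2=5-4\cos\theta$ exceeds $1$, "whose geometric mean is strictly greater than $1$", does not by itself produce a bound growing with $\varphi(e)$: individual factors tend to $1$ as $e\to\infty$ (take $\zeta=e^{2\pi i/e}$), and a geometric mean merely $>1$ is compatible with $\Phi_e(2)$ staying bounded. What you need is a \emph{uniform} constant $c>1$ for the geometric mean, i.e.\ $\Phi_e(2)\ge c^{\varphi(e)}$, and that must be proved. A standard way to close the gap is the M\"obius product $\Phi_e(q)=\prod_{d\mid e}(q^d-1)^{\mu(e/d)}$, which gives $\Phi_e(q)\ge q^{\varphi(e)}\,2^{-2^{\omega(e)-1}}$ with $\omega(e)$ the number of distinct prime factors of $e$; one then checks that $\varphi(e)$ eventually dominates $2^{\omega(e)-1}+\log_2 P(e)$ and inspects the finitely many remaining $e$ by hand, which is how the unique exception $\Phi_6(2)=3=P(6)$ (non-primitive since $3\mid 2^2-1$) emerges. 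So: right strategy, correct reductions, but the crucial inequality is asserted rather than established.
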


In Table~\ref{tab:tori} we have collected for each type of classical group
two maximal tori $T_1,T_2$ of $H$ (indicated by their orders). Then the order of
$T_i$ is divisible by a Zsigmondy prime divisor $\ell_i$ of $q^{e_i}-1$,
with $e_i$ given in the table (unless $e_i=2$ or $(e_i,q)=(6,2)$).

\begin{table}[htbp]
\caption{Two tori for classical groups.}  \label{tab:tori}
\[\begin{array}{cc|cc|cc}
 H& & |T_1|& |T_2|& e_1& e_2\cr
\hline
            A_{n-1}& (n\ge2)& (q^n-1)/(q-1)& q^{n-1}-1& n& n-1\cr
  \tw2A_{n-1}& (n\ge3$ odd$)& (q^n+1)/(q+1)& q^{n-1}-1& 2n& n-1\cr
            & (n\ge4$ even$)& q^{n-1}+1& (q^n-1)/(q+1)& 2n-2& n\cr
     B_n,C_n& (n\ge2$ even$)& q^n+1& (q^{n-1}+1)(q+1)& 2n& 2n-2\cr
             & (n\ge3$ odd$)& q^n+1& q^n-1& 2n& n\cr
         D_n& (n\ge4$ even$)& (q^{n-1}+1)(q+1)& (q^{n-1}-1)(q-1)& 2n-2& n-1\cr
                            & (n\ge5$ odd$)& (q^{n-1}+1)(q+1)& q^n-1& 2n-2& n\cr
            \tw2D_n& (n\ge4)& q^n+1& (q^{n-1}+1)(q-1)& 2n& 2n-2\cr
\end{array}\]
\end{table}

\begin{prop}   \label{prop:crosschar}
 Assume that $S$ is of classical Lie type not in characteristic~$p$. Then
 Theorem~\ref{thm:almost-simple} holds for all $S\le G\le\Aut(S)$.
\end{prop}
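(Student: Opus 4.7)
The strategy is to use Zsigmondy primitive prime divisors attached to the two maximal tori of Table~\ref{tab:tori}. For each pair $(S,p)$ with $S$ classical in characteristic different from $p$, I first select an index $i\in\{1,2\}$ and a Zsigmondy prime $\ell=\ell_i$ dividing $q^{e_i}-1$ such that $\ell\ne p$ and $p\nmid|T_i|$: since $|T_1|$ and $|T_2|$ in Table~\ref{tab:tori} have been chosen so as to involve essentially disjoint cyclotomic factors $\Phi_{e_i}(q)$, for every prime $p$ coprime to $q$ one of the two tori has order coprime to~$p$. Fix $X\le T_i\le S$ cyclic of order $\ell$. Because $\ell\ge e_i+1$ exceeds the rank of $\bH$, a generator $x$ of $X$ is a regular semisimple element with $C_\bH(x)=\bT_i$, and hence $C_H(X)=T_i$ and $C_S(X)=T_i\cap S$.

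Suppose for contradiction that $\NN_G^A(X,p)\ne\varnothing$. By Lemma~\ref{lem:normElem}, either $X$ normalises a non-trivial elementary abelian $p$-subgroup $E\le S$, or $X$ centralises a non-trivial $p$-element of $G$. The latter possibility is excluded by the choice of $T_i$, since any such $p$-element would lie in the product of $C_H(X)=T_i$ with the part of $C_G(X)$ lying above $H$, both of which can be arranged to have order coprime to~$p$. In the former case, coprimality of $|X|$ with $|E|$ yields the $\FF_p[X]$-decomposition $E=[E,X]\oplus C_E(X)$. As $p\nmid|T_i|$, we have $C_E(X)=1$, so $X$ acts fixed-point-freely on $E\setminus\{1\}$; in particular $\ell$ divides $p^k-1$ for $k=\dim_{\FF_p}E$, forcing $k\ge d$, where $d$ is the multiplicative order of $p$ modulo $\ell$. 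Combining this with standard bounds on the $p$-rank of classical groups in cross-characteristic gives a contradiction for groups of sufficiently large rank, while the low-rank cases can be settled individually using that any such $E$ must sit inside a proper $X$-invariant subgroup that is too small to admit a faithful linear action of $X$ in dimension $\ge d$.

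For the case $A\not\le S$, I would apply Lemma~\ref{lem:out}: $X$ must commute with a non-trivial $p$-element $y\in G\setminus S$. By Proposition~\ref{prop:oddout}, $y$ is a field, diagonal, graph, or graph-field automorphism of odd prime order. If $y$ is a field or graph-field automorphism, the induced action on the set of $F$-stable maximal tori, together with $\ell>e_i$, forces the Weyl-group contribution of $y$ to act non-trivially on $T_i$, so $y$ cannot centralise a regular $\ell$-element. If $y$ is a diagonal automorphism, Proposition~\ref{prop:oddout} confines $S$ to $\PSL_n^\pm(q)$ with $p\mid n$ (the $E_6$-cases are not classical), and a direct computation on the two tori of Table~\ref{tab:tori} rules out $p\mid|C_H(X)|$. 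The $\OO_8^+(q)$ graph case of Proposition~\ref{prop:oddout}(3) occurs only for $p=3$ and is handled by noting that triality shuffles the two tori of Table~\ref{tab:tori} non-trivially, so $X$ is never centralised by a triality automorphism of order~$3$.

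The principal obstacle is the arrangement of the first step: the cyclotomic bookkeeping needed to guarantee that one of $\ell_1,\ell_2$ simultaneously satisfies $\ell_i\ne p$ and $p\nmid|T_i|$ must be checked type by type, and in each type one needs to pin down how $|T_i|$ factors against the cyclotomic polynomials $\Phi_d(q)$ hit by $p$. A secondary source of small complications is the Zsigmondy exception $(q,e)=(2,6)$ of Lemma~\ref{lem:Zsig}, which forces separate treatment for a handful of groups such as $B_3(2)$, $C_3(2)$, $\OO_8^\pm(2)$ and $\PSU_7(2)$; there the missing primitive prime divisor is replaced by another divisor of $|T_i|$ sharing the regularity property $C_\bH(x)=\bT_i$.
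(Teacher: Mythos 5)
Your overall framework (Zsigmondy primes inside the two tori of Table~\ref{tab:tori}) matches the paper's, but the engine of your argument in the main case $A\le S$ is different and has a genuine gap. You select the one torus $T_i$ with $p\nmid|T_i\cap S|$, deduce that $X$ of order $\ell=\ell_i$ acts fixed-point-freely on $E$, and conclude $\ell\mid p^k-1$ for $k=\dim_{\FF_p}E$, hoping to contradict $p$-rank bounds. The resulting inequality $k\ge \mathrm{ord}_\ell(p)$ is far too weak: $\mathrm{ord}_\ell(p)$ can be very small (even $1$, when $\ell\mid p-1$), while the cross-characteristic $p$-rank of $S$ is large whenever $p\mid q\mp1$. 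For instance, for $S=\PSL_{10}(4)$ and $p=3$ one has $\ell_1=41$, $3\nmid|T_1\cap S|$, $\mathrm{ord}_{41}(3)=8$, and the $3$-rank of $S$ equals $9$, so no numerical contradiction arises; this is a systematic failure in arbitrarily large rank, not a finite list of exceptions. The paper's proof replaces this numerology by a structural fact: passing to the simply connected cover $\tilde\bH$ and using that an odd prime is not a torsion prime there, $C_{\tilde\bH}(\tilde E)$ is connected reductive of maximal rank, whence $N_S(E)/C_S(E)$ is a section of the Weyl group and so has order divisible by no prime exceeding $e_i$. Consequently $X$ \emph{centralises} $E$ for \emph{both} $i=1,2$, and the contradiction comes from $\gcd(|T_1\cap S|,|T_2\cap S|)=1$; one never has to choose the ``right'' torus. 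This step, together with the companion argument for the case where the preimage $\tilde E$ is non-abelian of symplectic type (needed exactly for $\PSL_n^{\pm}(q)$ with $p\mid n$ and $p\mid q\mp1$, where the outer automorphism group $\Sp_{2a}(p)$ of the extraspecial piece is controlled), is the essential missing content.

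Two further problems. In the diagonal-automorphism case your assertion that a computation on the tori rules out $p\mid|C_H(X)|$ is false: there $S=\PSL_n^{\pm}(q)$ with $p\mid(n,q\mp1)$, so $p$ divides $|T_1|=(q^n\mp1)/(q\mp1)$ and an outer diagonal $p$-element can centralise a generator of $T_1\cap S$, so Lemma~\ref{lem:out} gives nothing; the paper must switch to $X$ generated by a regular \emph{unipotent} element, whose centraliser in $\PGL_n(q)$ resp.\ $\PGU_n(q)$ is unipotent. Finally, your treatment of the exceptional configurations is too optimistic: for $(q,e)=(2,6)$ the divisors of $q^6-1=63$ are not primitive and do not in general retain the regularity property, and these groups (the correct list is $\PSL_6(2)$, $\PSL_7(2)$, $\PSU_6(2)$, $\OO_7(2)$, $\OO_8^{\pm}(2)$, $\OO_9(2)$, besides the rank-$\le2$ types) require separate ad hoc arguments rather than a drop-in replacement prime.
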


\begin{proof}
Let $\bH,H$ be as above so that $S=[H,H]$. We distinguish three cases.

\smallskip\noindent
{\bf Case 1:} $A\le S$.\\
The cases when $e_i\le2$, that is, $H$ is of type $A_1$, $A_2$, $\tw2A_2$ or
$B_2$, will be considered in Proposition~\ref{prop:smallrank}. For all other
types, for $X$ we choose a maximal cyclic subgroup of $T_i\cap S$ for $i=1,2$,
with $T_i$ from Table~\ref{tab:tori}. Note that the orders of
$T_1\cap S,T_2\cap S$ are coprime, and $T_i$ is the centraliser in $H$ of any
$s_i\in T_i$ of order $\ell_i$. Assume that $\NN_S^A(X,p)\ne\varnothing$.
By Lemma~\ref{lem:normElem} and the fact that $A\le S$, $X$ normalises a
non-trivial elementary abelian $p$-subgroup $E$ of $S$. Let
$\pi:\tilde\bH\rightarrow\bH$ be a simply-connected covering of $\bH$, and hence
$\ker(\pi)=Z(\tilde\bH)$. We let $\tilde E$ be a (normal) Sylow $p$-subgroup
of the full preimage of $E$ in $\tilde\bH$. Then $\tilde E$ is normalised by
the full preimage of $X$. First assume that $|Z(\tilde\bH)|$ is prime to $p$.
Then $\tilde E\cong E$ is abelian. As $\tilde\bH$ is simply-connected, $p>2$
is not a torsion prime of $\tilde\bH$ (see \cite[Tab.~14.1]{MT}), so an
inductive application of
\cite[Thm.~14.16]{MT} to a sequence of generators of the abelian group
$\tilde E$ shows that $\bC:=C_{\tilde\bH}(\tilde E)$ contains a maximal torus
of $\tilde\bH$ and is connected reductive, hence a subsystem subgroup of
$\tilde\bH$ of maximal rank. Then $N_{\tilde\bH}(\bC)=\bC N_{\tilde\bH}(\bT)$
for any maximal torus $\bT$ of $\tilde\bH$, so $N_{\tilde\bH}(\bC)/\bC$ is
isomorphic to a section of the Weyl group $W$ of $\tilde\bH$. As
$N_{\tilde\bH}(\tilde E)\le N_{\tilde\bH}(C_{\tilde\bH}(\tilde E))
=N_{\tilde\bH}(\bC)$ we see that $N_{\bH}(E)/C_{\bH}(E)$ is a section of $W$.
\par
Now note that the order of the Weyl group of $\bH$ is not divisible by any
prime larger than $e_i$, except for $H$ of type $D_n$, with $n\ge4$ even
and $e_2=n-1$. Here, $\ell_2\ge e_2+1=n$, but $n$ is even so that in fact
$\ell_2>n$ does not divide the order of the Weyl group either. This shows that
elements $s_i\in X$ of order $\ell_i$ must centralise $E$, for $i=1,2$.
So $p$ divides the order of $C_S(s_i)=T_i\cap S$ for $i=1,2$, a contradiction
as these orders are coprime.
\par
The cases when $(q,e_i)=(2,6)$, that is,
$S=\PSL_6(2),\PSL_7(2),\PSU_6(2)$, $\OO_7(2),\OO_8^\pm(2),\OO_9(2)$
will be handled in Proposition~\ref{prop:noZsigmondy}, while
$S=\PSU_4(2)\cong\PSp_4(3)$ will be treated in
Proposition~\ref{prop:smallrank}.

\par
Now assume that $\tilde E$ is non-abelian. Then $p$ divides $|Z(\tilde\bH)|$
and thus $S=\PSL_n(q)$ or $S=\PSU_n(q)$. Let $E_1$ be a minimal non-cyclic
characteristic subgroup of $\tilde E$. Then $E_1$ is of symplectic type,
hence extra-special (see \cite[(23.9)]{Asch}) and normalised by $X$. Write
$|E_1|=p^{2a+1}$, then $p^a\le n$ as $E_1\le\SL_n(q)$ or $\SU_n(q)$. Now the
outer automorphism group of
$E_1$ is $\Sp_{2a}(p)$, and all prime divisors of its order are at most
$(p^a+1)/2<n$. But our Zsigmondy prime divisors $\ell_i$ of $|X|$ satisfy
$\ell_i\ge n$, so again we conclude that $X$ must centralise an element of
order~$p$. We conclude as before.

\smallskip\noindent
{\bf Case 2:} $A\not\le S$ contains diagonal automorphisms.\\
In this case by Proposition~\ref{prop:oddout} we have $S=\PSL_n(q)$ or
$S=\PSU_n(q)$. Here let $X$ be generated by a regular unipotent element. By
Lemma~\ref{lem:out}, if $X$ normalises a non-trivial $p$-subgroup generated by
conjugates of $A$, $X$ must centralise some non-trivial element of order $p$.
But the centraliser of a regular unipotent element in the group $\PGL_n(q)$
resp.~$\PGU_n(q)$ of inner-diagonal automorphisms is obviously unipotent,
hence this case does not occur, as by assumption $p$ is not the defining
characteristic.

\smallskip\noindent
{\bf Case 3:} $A\not\le S$ does not contain diagonal automorphisms.\\
By Proposition~\ref{prop:oddout}, $A$ contains field, graph or graph-field
automorphisms. Now in all cases, a maximal cyclic subgroup $X$ of $T_1\cap S$ can
be identified to a subgroup of the multiplicative group of $\FF_{q^{e_1}}$ by
viewing some isogeny version of $H$ as a classical matrix group. The normaliser
in $S$ of $X$ then acts by field automorphisms of $\FF_{q^{e_1}}/\FF_q$. Using
the embedding into a matrix group one sees that the field automorphisms of
$S$ act on $X$ as the field automorphisms of $\FF_q/\FF_r$, where $r$ is the
characteristic of $\bH$. In particular they induce automorphisms of $X$
different from those induced by $N_S(X)$. So with this choice of $X$ field
automorphisms cannot lead to examples by Lemma~\ref{lem:out}.
Finally, if $S=\OO_8^+(q)$ and $A$ contains graph or graph-field automorphisms
of order~3 then we choose $X$ to be generated by an element $x$ of
order~$(q^2+1)/d$ in a maximal torus $T\le S$ of order~$(q^2+1)^2/d^2$, where
$d=\gcd(q-1,2)$. The normaliser $N_S(T)$ acts by the complex reflection
group $G(4,2,2)$ of 2-power order, while in the extension by a graph or
graph-field automorphism
it acts by the primitive reflection group $G_5$. These automorphisms hence
induce further non-trivial elements normalising $X$, and not centralising $x$.
\end{proof}

We now complete the proof for the small rank cases.

\begin{prop}   \label{prop:smallrank}
 Assume that $S=\PSL_2(q)$ ($q\ge8$), $\PSL_3(q)$, $\PSU_3(q)$ ($q>2$), or
 $\PSp_4(q)$ ($q>2$), and $p\nmid q$. Then Theorem~\ref{thm:almost-simple}
 holds for all $S\le G\le\Aut(S)$.
\end{prop}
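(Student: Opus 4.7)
The plan is to handle the four families individually, in each case replacing the generic two-torus/Zsigmondy argument of Proposition~\ref{prop:crosschar} — which breaks down precisely because here $e_i\le 2$ — by a direct appeal to the known subgroup structure: Dickson's theorem for $\PSL_2(q)$, Mitchell's theorem for $\PSL_3(q)$ and $\PSp_4(q)$, and the Mitchell/Hartley classification for $\PSU_3(q)$. The unifying observation is that each of $\PSL_2(q)$, $\PSL_3(q)$, $\PSU_3(q)$, $\PSp_4(q)$ admits at least two conjugacy classes of maximal cyclic tori whose orders are pairwise coprime: for $\PSL_2(q)$ the tori of orders $(q\pm1)/d$; for $\PSL_3(q)$ and $\PSU_3(q)$ the Coxeter and subregular cyclic tori of orders $(q^2+q+1)/d$ and $(q^2-q+1)/d$; and for $\PSp_4(q)$ the tori of orders $(q^2+1)/d$ and $(q-1)(q+1)/d$. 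So for the given odd prime $p\nmid q$ at least one such torus $T\le S$ has $p\nmid|T|$.

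For Case~1 ($A\le S$) I would take $X$ to be a cyclic subgroup of such a torus $T$ of the largest available prime order $\ell$. If $\NN_G^A(X,p)\ne\varnothing$, then by Lemma~\ref{lem:normElem} there is a non-trivial elementary abelian $p$-subgroup $E\le S$ normalised by $X$; consulting the maximal subgroup lists shows that, up to conjugacy, an element of order $\ell$ is contained only in $N_S(T)$, which is a (generalised) dihedral or Frobenius extension of $T$, and hence is a $p'$-group by the choice of $T$. This forces $E=1$, a contradiction.

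For Case~2 ($A\not\le S$), Proposition~\ref{prop:oddout} restricts $A$ to consist of field automorphisms, or of diagonal automorphisms only in the sub-cases $\PSL_3(q)$ with $3\mid q-1$ and $\PSU_3(q)$ with $3\mid q+1$. In the diagonal situation I would follow Case~2 of Proposition~\ref{prop:crosschar}: take $X$ generated by a regular unipotent element of $S$; its centraliser in $\PGL_3(q)$ or $\PGU_3(q)$ is unipotent, hence contains no $p$-element (as $p\ne r$ in cross-characteristic), and Lemma~\ref{lem:out} rules out $\NN_G^A(X,p)\ne\varnothing$. In the field-automorphism situation I would reproduce the Case~3 argument, choosing $X\le T$ identified with a subgroup of $\FF_{q^e}^\times$ via a natural matrix embedding: the field automorphisms act on $X$ through $\operatorname{Gal}(\FF_q/\FF_r)$ and so induce automorphisms of $X$ distinct from those realised by $N_S(X)$, hence they cannot centralise any non-trivial element of $X$.

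The main obstacle will be the bookkeeping in the small-$q$ exceptions where the Zsigmondy bound fails or leaves only very small primes $\ell$, namely $\PSL_2(8)$, $\PSL_2(9)$, $\PSL_3(3)$, $\PSU_3(3)$, $\PSU_3(4)$, $\PSU_3(5)$, and $\PSp_4(3)\cong\PSU_4(2)$. In these groups the relevant tori are short, and several classes of non-torus-normaliser maximal subgroups (notably $\fA_5$, $\fA_6$, $\fA_7$, small unitary or orthogonal subgroups, and Mitchell-type rigid subgroups) contain cyclic subgroups of the available small prime orders, so the generic argument collapses. For each such group a case analysis subdivided according to the possible values of $p$ will be required, using the tabulated data in \cite{Atlas} to exhibit an explicit $X$ with $\NN_G^A(X,p)=\varnothing$.
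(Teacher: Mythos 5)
Your treatment of the outer-automorphism cases is fine and coincides with the paper's (which simply refers those back to Cases 2 and 3 of Proposition~\ref{prop:crosschar}), but the engine of your Case 1 has a genuine gap. First, a factual error: $\PSL_3(q)$ has no cyclic torus of order $(q^2-q+1)/d$ and $\PSU_3(q)$ none of order $(q^2+q+1)/d$ (indeed $\gcd(q^2-q+1,|\PSL_3(q)|)\le 3$), so the advertised pair of coprime cyclic tori does not exist there; the second torus would have to be the one of order $(q^2-1)/d$. More seriously, your key claim that an element of order $\ell$ lies, up to conjugacy, only in $N_S(T)$ is true only for the Coxeter-type torus (orders $q\pm1$, $(q^2\pm q\pm1)/d$, $q^2+1$); for the second torus in each family it fails badly, since elements of order dividing $q-1$ or $q^2-1$ lie in parabolic, Levi and subfield subgroups. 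Finally, the prescription ``$X$ of the largest available prime order $\ell$'' degenerates for infinitely many $q$, not just the seven small groups you list: in $\PSL_2(q)$ with $(q-1)/2$ a $2$-power (e.g.\ any Fermat prime $q\ge17$) your $X$ is generated by an involution, and every involution of $\PSL_2(q)$ lies in the dihedral group $D_{q+1}$ normalising a Sylow $p$-subgroup for odd $p\mid(q+1)$, so $\NN_S(X,p)\ne1$. The failure is thus structural, and a finite list of exceptional groups cannot absorb it.

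The idea you are missing is that the second test subgroup should be chosen against the normaliser of the putative $p$-subgroup $E$, not as a second torus coprime to the first. After reducing to $A\le S$ (so that $\NN_G^A(X,p)\subseteq\NN_S(X,p)$), the paper pairs the Coxeter torus, which forces $p$ to divide its order, with a small-order element whose order is prime to $|N_S(E)|$ for exactly those remaining primes: a unipotent element of order $r$ for $\PSL_2(q)$ with $q=r^f$ odd (the normaliser of a non-trivial $p$-subgroup, $2<p\mid q^2-1$, is dihedral of order $q\pm1$, prime to $r$); an involution for $\PSL_3(q)$ and $\PSU_3(q)$ when $p\mid(q^2\pm q+1)/d$ (the relevant normaliser has odd order $3(q^2\pm q+1)/d$); and an element of order $3$ for $\PSp_4(q)$ when $p\mid q^2+1$ (normaliser of order $4(q^2+1)/d$). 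With that replacement the case analysis closes uniformly and no list of small exceptions is needed.
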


\begin{proof}
We just need to deal with the case that $G=S$, since the other possibilities
were already discussed in the proof of Proposition~\ref{prop:crosschar}.
First assume that $S=\PSL_2(q)$. If $q\ge8$ is even, then elements of order
$q+1$ do not normalise any non-trivial $p$-subgroup with $p$ dividing $q-1$,
while elements of order $q-1$ do not normalise any with $p|(q+1)$.
If $q=r^f\ge9$ is odd, elements of order~$r$ do not normalise non-trivial
$p$-subgroups for $2<p|(q^2-1)$. \par
Next let $S=\PSL_3(q)$. Elements of order $(q^2+q+1)/\gcd(3,q-1)$ do not
normalise non-trivial $p$-subgroups for $p$ dividing $q^2-1$, while elements
of order $2$ do not normalise non-trivial $p$-subgroups for $p$ dividing
$(q^2+q+1)/\gcd(3,q-1)$. Similarly for $S=\PSU_3(q)$, $q>2$, we can argue
using elements of order $(q^2-q+1)/\gcd(3,q+1)$, respectively of order~2.
\par
Finally assume that $S=\PSp_4(q)$. Using $X$ of order $q^2+1$ we see that
we must have $p|(q^2+1)$. In this case, take $X$ of order~3.
\end{proof}

\begin{prop}   \label{prop:noZsigmondy}
 Assume that $S$ is one of $\PSL_6(2),\PSL_7(2),\PSU_6(2),\OO_7(2),
 \OO_8^\pm(2)$ or $\OO_9(2)$. Then Theorem~\ref{thm:almost-simple} holds for all
 $S\le G\le\Aut(S)$.
\end{prop}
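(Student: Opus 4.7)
The strategy is to mirror the proof of Proposition~\ref{prop:crosschar}, adapted to the fact that Zsigmondy's theorem (Lemma~\ref{lem:Zsig}) fails precisely at $(q,e)=(2,6)$. The crucial observation is that in each of these six groups only \emph{one} of the two tori listed in Table~\ref{tab:tori} hits the exception; the other still supplies a primitive prime divisor $\ell\ge e_i+1$, namely
\[ \ell = 31,\;127,\;11,\;7,\;7,\;17,\;17 \]
for $\PSL_6(2)$, $\PSL_7(2)$, $\PSU_6(2)$, $\OO_7(2)$, $\OO_8^+(2)$, $\OO_8^-(2)$, $\OO_9(2)$ respectively. Taking $X$ to be a maximal cyclic $p'$-subgroup of this surviving torus (intersected with $S$) reproduces verbatim the Weyl-section argument of Proposition~\ref{prop:crosschar} and thereby disposes of every prime $p\ne\ell$ in Case~1.

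What remains, group by group, is to handle the single residual prime $p=\ell$. For this I would pick a second cyclic $p'$-subgroup $X'\le S$ whose order has a prime divisor $\ell'$ not dividing the order of the Weyl group of $\bH$; the same Weyl-section principle then forces the $\ell'$-part $Y\le X'$ to centralise any non-trivial $E\in\NN_G(X',p)$. Here $\ell'$ need not be a Zsigmondy prime and $C_\bH(Y)$ need not be a torus, but for these small groups it is a subsystem subgroup defined over $\FF_2$ whose order can be read off by inspection or from \cite{Atlas} and then verified to be coprime to $p=\ell$. For $\PSL_6(2)$ with $p=31$ the natural choice is $X'$ of order $63$ in a Coxeter torus, whose order-$7$ part has centraliser a Levi $\GL_2(8)$; for $\PSL_7(2)$ with $p=127$ one takes $X'$ of order $31$; for $\PSU_6(2)$ with $p=11$, $X'$ of order $21$, upon noting that $\PSU_6(2)$ contains no element of order $77$; and analogous ad hoc choices work in the three orthogonal cases.

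Finally, the outer automorphism cases (Cases~2 and~3 in the proof of Proposition~\ref{prop:crosschar}) require no new argument: Case~2 uses only that regular unipotent elements have unipotent centralisers, and Case~3 needs only \emph{one} torus carrying a Zsigmondy prime, which is supplied by the surviving torus above. Among our six groups the only outer automorphisms of odd prime order are the diagonal one of $\PSU_6(2)$ and the triality of $\OO_8^+(2)$, both for $p=3$, and both are treated as in Proposition~\ref{prop:crosschar}. The main obstacle is therefore the residual-prime bookkeeping in Case~1, but this reduces to identifying a few small subsystem subgroups and reading off their orders, which is routine in view of the minuscule size of each of the six groups.
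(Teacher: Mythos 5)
Your proposal is correct and follows essentially the same route as the paper: exploit the one surviving Zsigmondy torus to pin $p$ down to the single prime $\ell_i$, eliminate that residual prime with a second small cyclic subgroup, and treat the odd-order outer automorphisms (which occur only for $\PSU_6(2)$ and $\OO_8^+(2)$, both with $p=3$) separately. The only cosmetic differences are that the paper disposes of the residual prime by a direct normaliser/maximal-subgroup check (as in the sporadic case) rather than by re-running the Weyl-section argument, and that it writes out explicit Atlas-based verifications for the two groups with $|\Out(S)|>2$ instead of delegating them back to Cases 2 and 3 of Proposition~\ref{prop:crosschar}.
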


\begin{table}[htbp]
\caption{Some groups over $\FF_2$.} \label{tab:noZsig}
$\begin{array}{|lll|lll|ll|}\hline
\PSL_6(2)&    (2; 7) && \PSU_6(2)& (11; 3) && \OO_8^+(2)& (7; 5)\\
\PSL_7(2)& (127; 31) && \OO_7(2)&   (7; 5) && \OO_8^-(2)& (17; 3)\\
\OO_9(2)& (17; 5) && & && & \\
     \hline
\end{array}$\\
\end{table}

\begin{proof}
In all of these groups, just one of the two Zsigmondy primes $\ell_i$ exists.
By the argument given in the proof of Proposition~\ref{prop:crosschar}, we
still obtain that either elements of order $\ell_i$ centralise a $p$-element
or that $S\ne G$. We may then
conclude as in the proof of Proposition~\ref{prop:Spor}, using an additional
prime as in Table~\ref{tab:noZsig}, except in the two cases when $|\Out(S)|>2$:

Let $S=\PSU_6(2)$. Here $\ell_1=11$ shows that $p=11$ if $G=S$, and since a
subgroup of order $7$ does not normalise one of order $11$, we reach a
contradiction in this case. We assume therefore that $p=3$ and
$A\not\leq S$. Let first $X$ be a subgroup of $S$ of order $11$ and $Y$ a
maximal subgroup of $S$ containing $X$. Then $Y\simeq \PSU_5(2)$ or $M_{22}$,
and therefore $\NN_Y(X,3)=\NN_S(X,3)=1$. Now if $E\in\NN^A_G(X,3)$ we have
that $E\cap S\in\NN_S(X,3)=1 $ and so $E=C_E(X)$ has order $3$, and, $E$ being
generated by conjugates of $A$, we have that $E$ is a conjugate of $A$.
Now the group $S$ has four classes of outer elements of order three, denoted
$3D, 3E, 3F$ and $3G$ in \cite{Atlas}. Amongst these just $3D$ has centraliser
in $S$ divisible by $11$, namely $C_S(3D)\simeq \PSU_5(2)$. We have therefore
that $A=\seq{x}$ for some $x$ in $3D$. Now take $X$ a subgroup
of $S$ of order $7$. We may argue as before. Since a maximal subgroup $Y$
of $S$ containing $X$ is isomorphic to one of
$$M_{22},\, 2^9.\PSL_3(4),\, \PSU_4(3)\co2,\, \PSp_6(2),\, \PSL_3(4)\co2,$$
we have that $\NN_Y(X,3)=\NN_S(X,3)=1$, and therefore any element
$E\in \NN^A_G(X,3)$ is a cyclic subgroup conjugate to $A$ and centralised
by $X$. This is a contradiction since $7$
does not divide $|\PSU_5(2)|$.

Let $S=\OO_8^+(2)$. The prime $\ell_2=7$ shows that $p=7$ if $A\le S$.
Since a subgroup of order $5$ does not normalise any non-trivial $7$-subgroup,
we reach a contradiction if $A\leq S$. Let $p=3$ and $A\not\leq S$.
In $G$ there are three classes of outer $3$-elements, two of order $3$ and one
of order $9$. In all cases $5$ does not divide the order of their centralisers
in $S$. Thus if $X$ is a cyclic subgroup of order $5$ we reach a contradiction
with Lemma~\ref{lem:out}.
\end{proof}

\subsection{Groups of exceptional type}
In this section we prove Theorem~\ref{thm:almost-simple} when $S$ is one of
the exceptional groups of Lie type. We keep the setting from the beginning of
the previous subsection. Note that we need not treat $\tw2B_2(2)$ (which is
solvable), $G_2(2)\simeq\PSU_3(3).2$, $^2G_2(3)\simeq\PSL_2(8).3$ and
$\tw2F_4(2)'$ (see Section~\ref{subsec:spor}).

As in the case of classical groups we provide in Table~\ref{tab:exc} for each
type of group two maximal tori of $H$, indicated by their orders. Here, we
denote by $\Phi_n$ the $n$-th rational cyclotomic polynomial evaluated at $q$,
and moreover we let $\Phi_8'=q^2+\sqrt{2}q+1,\, \Phi_8''=q^2-\sqrt{2}q+1,
\Phi_{24}'=q^4+\sqrt{2}q^3+q^2+\sqrt{2}q+1$,
$\Phi_{24}''=q^4-\sqrt{2}q^3+q^2-\sqrt{2}q+1$ for $q^2=2^{2f+1}$, and
$\Phi_{12}'=q^2+\sqrt{3}q+1,\, \Phi_{12}''=q^2-\sqrt{3}q+1$ for $q^2=3^{2f+1}$.
We then have
$\gcd(|T_1|,|T_2|)=d$, where $d=(3,q-1)$,  $(3,q+1)$ and $d=(2,q-1)$ for
$S=E_6(q), \tw2E_6(q), E_7(q)$ respectively, and $d=1$ otherwise. Furthermore,
$|H:S|=|T_i:T_i\cap S|=d$ in all cases.

\begin{table}[htbp]
\caption{Two tori for exceptional groups.}  \label{tab:exc}
\[\begin{array}{|cc|ccc||cc|ccc|}
\hline
 H& & |T_1|& |T_2|&& H& & |T_1|& |T_2|& d\cr
\hline
\tw2B_2(q^2)& (q^2\ge8)& \Phi_8'& \Phi_8''&& F_4(q)& & \Phi_8& \Phi_{12}& 1\cr
  ^2G_2(q^2)& (q^2\ge27)& \Phi_{12}'& \Phi_{12}''&& E_6(q)& & \Phi_3\Phi_{12}& \Phi_9& (3,q-1)\cr
      G_2(q)& (q\ge3)& \Phi_3& \Phi_6&& \tw2E_6(q)& & \Phi_6\Phi_{12}& \Phi_{18}& (3,q+1)\cr
  \tw3D_4(q)& & \Phi_3^2& \Phi_{12}&& E_7(q)& & \Phi_2\Phi_{14}& \Phi_1\Phi_7& (2,q-1)\cr
\tw2F_4(q^2)& (q^2\ge8)& \Phi_{24}'& \Phi_{24}''&& E_8(q)& & \Phi_{15}& \Phi_{30}& 1\cr
\hline
\end{array}\]
\end{table}

\begin{prop}   \label{prop:exc}
 Assume that $S$ is of exceptional Lie type not in characteristic~$p$. Then
 Theorem~\ref{thm:almost-simple} holds for all $S\le G\le\Aut(S)$.
\end{prop}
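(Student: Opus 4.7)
The plan is to follow the blueprint of Proposition~\ref{prop:crosschar}, replacing the classical tori by those in Table~\ref{tab:exc} and splitting the argument according to Proposition~\ref{prop:oddout} into the cases $A\le S$, $A$ contains a diagonal automorphism, and $A$ contains a field automorphism (graph or graph-field automorphisms of odd prime order arising only for $S=\OO_8^+(q)$).

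In Case~1, when $A\le S$, I would take $X$ to be a maximal cyclic subgroup of $T_i\cap S$ for $i=1,2$, pick a Zsigmondy prime divisor $\ell_i$ of $q^{e_i}-1$ (which exists by Lemma~\ref{lem:Zsig}, the only formally excluded case $(q,e_i)=(2,6)$ occurring in $G_2(2)$, already ruled out as non-simple), and observe that $\ell_i\ge e_i+1$ strictly exceeds every prime divisor of the order of the Weyl group of $\bH$ for each type in turn. Supposing $\NN_S^A(X,p)\ne\varnothing$, Lemma~\ref{lem:normElem} supplies a non-trivial elementary abelian $E\le S$ normalised by $X$. Passing to the simply-connected cover $\tilde\bH$, if $p\nmid|Z(\tilde\bH)|$ then $\tilde E\cong E$ is abelian, and iterated application of \cite[Thm.~14.16]{MT} yields a connected reductive centraliser, so that $N_{\tilde\bH}(\tilde E)/C_{\tilde\bH}(\tilde E)$ embeds into the Weyl group. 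Elements of order $\ell_i$ therefore centralise $\tilde E$, forcing $p\mid|T_i\cap S|$ for $i=1,2$. Since the tori in Table~\ref{tab:exc} are chosen so that the odd part of $\gcd(|T_1\cap S|,|T_2\cap S|)$ is trivial outside of the isolated pairs $(p,S)\in\{(3,E_6(q)),(3,\tw2E_6(q))\}$, this gives the desired contradiction generically.

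The residual subcases $(p,S)=(3,E_6(q)),(3,\tw2E_6(q))$ have $p\mid|Z(\tilde\bH)|$, so $\tilde E$ may be non-abelian. Here I would reproduce the extra-special subgroup analysis of Proposition~\ref{prop:crosschar}: extract a minimal characteristic non-cyclic subgroup $E_1\le\tilde E$ of symplectic type, necessarily extra-special of order $3^{2a+1}$; the bound $3^a\le 27$ from the minimal faithful module of $\tilde\bH$ gives $a\le3$, and $\operatorname{Out}(E_1)=\Sp_{2a}(3)$ has prime divisors at most~$13$. Whenever $\ell_i>13$ this again forces $X$ to centralise a $3$-element and concludes as above; the borderline values such as $\ell_1=\Phi_{12}(2)=13$ I would handle by substituting a third torus with a larger Zsigmondy prime, namely the Coxeter-type tori of order $\Phi_9(q)$ in $E_6(q)$ and $\Phi_{18}(q)$ in $\tw2E_6(q)$, whose Zsigmondy primes are at least~$19$ and hence beat all primes in $\Sp_6(3)$.

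Case~2 is immediate: Proposition~\ref{prop:oddout} forces $p=3$ and $S\in\{E_6(q),\tw2E_6(q)\}$; taking $X=\seq{u}$ for a regular unipotent $u\in S$, the centraliser of $u$ in the group of inner-diagonal automorphisms is unipotent, so contains no non-trivial $p$-element, and Lemma~\ref{lem:out} rules out $\NN_G^A(X,p)\ne\varnothing$. For Case~3, with $X$ a maximal cyclic subgroup of $T_1\cap S$ embedded into $\FF_{q^{e_1}}^\times$ via a suitable matrix realisation of $\bH$, the quotient $N_S(X)/C_S(X)$ acts on $X$ as a subgroup of $\operatorname{Gal}(\FF_{q^{e_1}}/\FF_q)$, whereas the field automorphisms of $S$ induce the Galois group of $\FF_q$ over its prime field; these give distinct subgroups of $\Aut(X)$, so no field automorphism normalises $X$ outside its centraliser, and Lemma~\ref{lem:out} concludes. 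The main obstacle is the $E_6/\tw2E_6$, $p=3$ piece of Case~1, where one must simultaneously control the Zsigmondy prime and the prime spectrum of $\Sp_{2a}(3)$, which is the reason for introducing the third Coxeter-type torus.
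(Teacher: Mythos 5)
The overall architecture (two coprime tori carrying Zsigmondy primes that beat the Weyl group, plus the regular-unipotent and field-automorphism arguments when $A\not\le S$) is the right one and matches the paper, and your Cases~2 and~3 are essentially the paper's. But Case~1 has a genuine gap: you treat ``$p\mid|Z(\tilde\bH)|$'' as the only obstruction to iterating \cite[Thm.~14.16]{MT}, whereas the actual hypothesis needed there is that $p$ is not a \emph{torsion prime} of $\tilde\bH$. For simply connected classical groups and odd $p$ this is automatic (which is why Proposition~\ref{prop:crosschar} only had to worry about the centre), but for the exceptional types $p=3$ is a torsion prime of the simply connected groups of types $F_4$, $E_6$, $E_7$, $E_8$ (hence also of $\tw2F_4$), and $p=5$ is one for $E_8$ --- independently of whether $p$ divides the (at most order~$3$) centre. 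So the Weyl-group argument is not available for, say, $F_4(q)$ or $E_8(q)$ with $p=3$, nor for $E_6(q)$ with $p=3$ and $3\nmid(q-1)$, and these cases are simply not covered by your proof. The extra-special analysis you import does not help there, since in all of those cases the preimage $\tilde E$ is abelian; the problem is the possible non-connectedness of its centraliser, not its non-commutativity.

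The paper closes exactly this gap by a different device: for the bad primes $p\le5$ it bounds the rank of an elementary abelian $p$-subgroup of $H$ by the Lie rank $m$ (citing \cite[Thm.~4.10.3]{GLS}) and checks directly that the Zsigmondy primes $\ell_i$ do not divide $p^s-1$ for $s\le m$, so that an element of order $\ell_i$ must centralise $E$, contradicting $\gcd(|T_1\cap S|,|T_2\cap S|)=1$ as before. This numerical check genuinely fails for $F_4(2)$, $E_6(2)$ and $\tw2E_6(2)$ with $p=3$ (for instance $\Phi_{12}(2)=13$ divides $3^3-1$), and those three groups require the further ad hoc argument given in the paper --- exceptions your proposal does not even detect. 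You would need to add this (or an equivalent) treatment of the torsion primes for the proof to be complete; the generic part of your Case~1 and your Cases~2 and~3 can stand as written.
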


\begin{proof}
First assume that $A\le S$.
Using for $X$ maximal cyclic subgroups of $S\cap T_i$, for $T_i$ as listed in
Table~\ref{tab:exc}, we conclude by the same arguments as in the proof of
Proposition~\ref{prop:crosschar} that in a possible counterexample to
Theorem~\ref{thm:almost-simple} the prime $p$ would divide the orders of both
$S\cap T_i$, $i=1,2$, which is a contradiction as their orders are coprime,
unless
possibly if $p$ is a torsion prime for $H$. The torsion primes for groups of
exceptional Lie type are just the bad primes (see \cite[Tab.~14.1]{MT}); in
particular $p\le5$, and even $p=3$ unless $S=E_8(q)$. The maximal rank of an
elementary abelian $p$-subgroup of $H$, for $p$ odd, is at most the rank
$m$ of $H$, see e.g.~\cite[Thm.~4.10.3]{GLS}. It is easy to check that for
all bad primes $p\nmid q$ and $s\le m$, $p^s-1$ is not divisible by $\ell_i$
for $i\in\{1,2\}$, so again $T_i$ must centralise a non-trivial $p$-element,
which contradicts the fact that $\gcd(|T_1|,|T_2|)=1$, except for the groups
$F_4(2),E_6(2)$ and $\tw2E_6(2)$ (each with $p=3$). In all of the
latter cases, at least one of the $\ell_i$ does not divide $3^s-1$ for
$s\le 4$, and does not divide the centraliser order of an element of order $3$
either, so again we are done.
\par
Now assume that $A\not\le S$. Then either $A$ contains field automorphisms, in
which case taking for $X$ a maximal cyclic subgroup of $T_1$ shows that no
example arises by
Lemma~\ref{lem:out} as field automorphisms induce proper non-inner
automorphisms on this torus. Or, we have that $S=E_6(q)$ or $\tw2E_6(q)$,
$p=3$, and $A$ contains diagonal automorphisms. In this case we take for $X$
the subgroup generated by a regular unipotent element; this has a unipotent
centraliser in the group of inner-diagonal automorphisms and thus we are done.
\end{proof}

\subsection{Groups of Lie type in defining characteristic}
If $p$ is the defining prime for $S$, we can again make use of the two tori
$T_1,T_2$ introduced before.

\begin{prop}   \label{prop:defchar}
 Assume that $S$ is of Lie type in characteristic~$p$. Then
 Theorem~\ref{thm:almost-simple} holds for all $S\le G\le\Aut(S)$.
\end{prop}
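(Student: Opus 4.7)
The plan is to exploit the Borel--Tits theorem: since $p$ is the defining characteristic of $S$, every non-trivial $p$-subgroup of $G$ is unipotent, and its $\bH$-normaliser lies in a proper parabolic subgroup of $\bH$. Accordingly, we will choose the cyclic $p'$-subgroup $X\le S$ so that it neither lies in a proper parabolic of $\bH$ (which rules out $A\le S$) nor centralises any non-trivial outer $p$-element of $G$ (which, via Lemma~\ref{lem:out}, rules out $A\not\le S$).

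For the case $A\le S$, we set $X=\seq{s}\le T_1\cap S$, where $s$ has prime order $\ell_1$, a Zsigmondy prime divisor of $q^{e_1}-1$ with $T_1,e_1$ as in Table~\ref{tab:tori} or Table~\ref{tab:exc}; the Zsigmondy-free pairs $(q,e_1)=(2,6)$ are absorbed by switching to $\ell_2$ or by the individual arguments of Proposition~\ref{prop:noZsigmondy}. A direct inspection of both tables shows that $T_1$ is an anisotropic maximal torus of $\bH$ in every case, in the sense that its associated $W$-class has no eigenvalue~$1$ on the reflection representation. Hence $X\le T_1$ lies in no proper parabolic of $\bH$. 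If some $E\in\NN_G^A(X,p)$ were non-trivial, then $E\le S$ is unipotent and Borel--Tits would force $X\le N_{\bH}(E)\le P$ for some proper parabolic $P$ of $\bH$, a contradiction.

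For the case $A\not\le S$, Proposition~\ref{prop:oddout} together with the fact that $p$ is the defining characteristic excludes diagonal automorphisms (which are semisimple of $p'$-order), so $A$ contains a field, graph, or graph-field automorphism of order $p$. By Lemma~\ref{lem:out} it suffices to arrange that $X$ centralises no non-trivial outer $p$-element of $G$. For a field automorphism $\tau\in G\setminus S$ of order $p$ we have $q=q_0^p$ and $C_S(\tau)$ is essentially a group of the same Lie type over the subfield $\FF_{q_0}$; since $\ell_1$ is a Zsigmondy prime for $q_0^{pe_1}-1$, it divides no $q_0^m-1$ with $m<pe_1$, so $\ell_1\nmid |C_S(\tau)|$ and the $X$ chosen above still works. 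The sole remaining situation is $S=\OO_8^+(q)$ with $p=3$ and $A$ containing a triality or graph-field automorphism; here we instead take $X$ of prime order dividing $\Phi_4(q)=q^2+1$ inside the anisotropic $D_4$-torus of type $\Phi_4^2$. The centraliser of a triality in $S$ is $G_2(q)$, of order $q^6(q^6-1)(q^2-1)$, which is coprime to $\Phi_4(q)$; the new $X$ is still anisotropic, so the argument of the inner case goes through as well.

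The main obstacle will be the orderly verification that each $T_1$ in Tables~\ref{tab:tori} and~\ref{tab:exc} is genuinely anisotropic and that every $X$ chosen above is regular enough to lie in $T_1$ alone, together with dovetailing the Zsigmondy-free pairs $(q,e_i)=(2,6)$ and the small-rank exceptions cleanly with Propositions~\ref{prop:smallrank} and~\ref{prop:noZsigmondy}.
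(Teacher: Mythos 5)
Your core argument for $A\le S$ is exactly the paper's: by Borel--Tits the normaliser of a non-trivial unipotent subgroup lies in a proper parabolic, while an element of the anisotropic torus $T_1$ whose centraliser is $T_1$ itself lies in no proper parabolic. However, your insistence that $X$ have \emph{Zsigmondy prime} order $\ell_1$ creates a real gap. For type $A_1$ one has $e_1=2$, where Lemma~\ref{lem:Zsig} says nothing, and in defining characteristic $q$ is an odd prime power, so for Mersenne $q$ ($q=7,31,127,\dots$) the torus $T_1$ of $\PSL_2(q)$ has order $q+1=2^k$ and $q^2-1$ has \emph{no} primitive prime divisor: your $X$ does not exist. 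The fallbacks you cite do not rescue this, since Propositions~\ref{prop:smallrank} and~\ref{prop:noZsigmondy} are stated and proved only for $p\nmid q$ (and the pair $(q,e)=(2,6)$ cannot occur here anyway, as $p$ odd forces $q$ odd). The repair is what the paper does: take for $X$ a maximal cyclic subgroup of $T_1\cap S$ generated by a \emph{regular} semisimple element $s$, so that $C_S(s)\le T_1$; neither primality nor Zsigmondy is needed, because in defining characteristic every $T_i$ is automatically a $p'$-group.

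The second gap is in the $\OO_8^+(q)$, $p=3$ case. You take $X$ of prime order $r\mid q^2+1$ and check only the triality centraliser $G_2(q)$; but graph automorphisms of order $3$ can also have centraliser $\PGL_3(q)$ or $\PGU_3(q)$ (harmless here, but unexamined), and, more seriously, graph-\emph{field} automorphisms $\tau$ of order $3$ exist when $q=q_0^3$ and have $C_S(\tau)\cong\tw3D_4(q_0)$. Since $q^2+1=q_0^6+1=(q_0^2+1)(q_0^4-q_0^2+1)$ and $q_0^4-q_0^2+1$ divides $|\tw3D_4(q_0)|$, your prime $r$ may divide $|C_S(\tau)|$, in which case $X$ is centralised by an outer $3$-element and the appeal to Lemma~\ref{lem:out} collapses. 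The paper avoids this by taking $x$ of \emph{full} order $(q^2+1)/\gcd(2,q-1)$, which divides none of $|\tw3D_4(q_0)|$, $|G_2(q)|$, $|\PGL_3(q)|$, $|\PGU_3(q)|$ (because the $(q_0^2+1)$-part survives), and by observing via the reflection groups $G(4,2,2)\subset G_5$ that graph and graph-field automorphisms act non-trivially on $X$. Your field-automorphism argument, by contrast, is fine and is essentially the one the paper uses.
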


\begin{proof}
Let $A\le H$ be cyclic of order~$p$ and $1\ne P\le H$ be a $p$-subgroup
generated
by conjugates of $A$. First assume that $A\le S$. Then $P$ is a non-trivial
unipotent subgroup of $S$, hence its normaliser $N_S(P)$ is contained in some
proper parabolic subgroup of $S$ (see \cite[Thm.~26.5]{MT}). Let $s$ be a
regular semisimple element of $S$ in the torus $T_1$ as given in
Table~\ref{tab:tori} when $S$ is classical, or in Table~\ref{tab:exc} in
case $S$ is exceptional. Then the centraliser $C_S(s)$ is contained in $T_1$,
in particular $s$ does not centralise any non-trivial split torus of $\bH$ and
so is not contained in a proper parabolic subgroup of $S$. Thus
$\NN_S^A(X,p)=\varnothing$.
\par
If $A\not\le S$, then by Proposition~\ref{prop:oddout} either $A$ contains a
field automorphism of $S$, or $p=3$ and $A$ contains a graph or graph-field
automorphism. According to Lemma~\ref{lem:out}, $X$ is centralised by an outer
$p$-element. Now as pointed out in the proof of
Propositions~\ref{prop:crosschar} and~\ref{prop:exc}, field automorphisms do
not enlarge the centraliser of $X$ as defined above, so we may assume
that $S=\OO_8^+(q)$, $p=3$ and $H$ involves a graph or graph-field
automorphism. In this case take $X$ generated by a semisimple element of
order $(q^2+1)/\gcd(2,q-1)$ and conclude as in the proof of
Proposition~\ref{prop:crosschar}.
\end{proof}

\section{Proof of Theorem A}   \label{sec:main}
In this section we complete the proof of Theorem A.
We need the following result, whose proof can be found in \cite[Thm.~4.2]{INW}.

\begin{lem}    \label{Lem_W2_sol}
 Let $G$ be a finite group and $V$ a faithful irreducible $G$-module. Assume
 that $p$ is an odd prime number different from the characteristic of $V$ and
 that $A$ is a subgroup of $G$ of order $p$ that lies in $O_p(G)$.
 Then there exists an element $v\in V$ such that
 $$A\not\subseteq \bigcup_{g\in G} C_G(v)^g.$$
\end{lem}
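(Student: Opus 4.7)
The plan is to reformulate the conclusion and then proceed by induction on $|G|$, exploiting coprime action and Clifford theory.

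First I would reformulate: since $|A|=p$ is prime, any non-identity element of $A$ generates $A$, and $a\in C_G(v)^g$ is equivalent to $a$ (hence all of $A$) fixing $gv$. Consequently, $A\subseteq\bigcup_{g\in G}C_G(v)^g$ holds if and only if the orbit $Gv$ meets $C_V(A)$, if and only if $v\in\bigcup_{g\in G}g\cdot C_V(A)$. So the task becomes: find $v\in V$ not lying in any $G$-translate of $C_V(A)$.

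Next, coprime action gives the setup: since $\operatorname{char}(V)\neq p$ and $A$ is a $p$-group, Maschke yields $V=C_V(A)\oplus[V,A]$. The faithfulness of $V$ together with $A\neq 1$ forces $[V,A]\neq 0$, so $C_V(A)$ is a proper subspace of $V$, and so is each translate $gC_V(A)=C_V(A^g)$.

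The key step would be an induction on $|G|$ combined with Clifford theory applied to the normal $p$-subgroup $P=O_p(G)\supseteq A$. If $A$ is already normal in $G$, all $G$-translates of $C_V(A)$ coincide with the proper subspace $C_V(A)$ and we are done. Otherwise, decompose $V|_P$ into $P$-isotypic components, which $G$ permutes transitively. If there is more than one component, a vector supported on a single component can only lie in $C_V(A^g)$ when $g$ preserves that component, so one reduces via the stabiliser of the component to a proper quotient situation and applies induction. The residual case is that $V$ is $P$-homogeneous, where one uses the explicit structure of faithful irreducible representations of $p$-groups in coprime characteristic (tensor-induced from an extra-special quotient) together with the odd-prime hypothesis to exhibit the desired $v$.

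The main obstacle is that finite-dimensional vector spaces over finite fields can be covered by finitely many proper subspaces, so the naive inequality $|\bigcup_{g}gC_V(A)|\leq[G:N_G(A)]\cdot|C_V(A)|$ is insufficient on its own. Overcoming this requires exploiting the interplay between the $G$-conjugacy class of $A$ inside $O_p(G)$ and the $A$-module structure of $V$; the hypothesis that $p$ is odd enters crucially, since the Hall--Higman/Wolf-type analysis of automorphism groups of homogeneous $p$-group modules genuinely admits counterexamples for $p=2$ (parallel to the failure noted for Theorem~A and Theorem~C at $p=2$).
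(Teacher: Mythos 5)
Your reformulation ($A\subseteq\bigcup_{g}C_G(v)^g$ if and only if $v$ lies in the union of the fixed spaces $C_V(A^g)$), the coprimality decomposition $V=C_V(A)\oplus[V,A]$, and the warning that a naive subspace count cannot finish the job are all correct. The gap is in your key Clifford-theoretic step. Since every conjugate $A^g$ lies in the normal subgroup $O_p(G)$, each $A^g$ stabilises \emph{every} $O_p(G)$-homogeneous component $V_i$, so $C_V(A^g)=\bigoplus_i C_{V_i}(A^g)$; whether a vector $v\in V_1$ lies in $C_V(A^g)$ has nothing to do with whether $g$ preserves $V_1$, only with whether $A^g$ moves $v$. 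And conjugates of $A$ acting trivially on all of $V_1$ genuinely occur: take $p$ odd, $G=(C_p\times C_p)\rtimes C_2$ with the involution $\sigma$ swapping the factors, and $V=W\oplus W^{\sigma}$ induced from a one-dimensional module $W$ on which the first factor acts faithfully and the second trivially (over a field with $p$-th roots of unity, characteristic prime to $p$). Then $V$ is faithful and irreducible, $A=C_p\times 1\le O_p(G)$, $C_V(A)=W^{\sigma}$ and $C_V(A^{\sigma})=W$, so \emph{every} vector supported on a single component is centralised by one of the two conjugates of $A$; the vectors witnessing the lemma are exactly those with nonzero projection to both components. So the reduction fails as stated. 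The workable induction (this is how Isaacs--Navarro--Wolf argue) chooses $v=v_1+\cdots+v_t$ with a component in each $V_i$ so that every conjugate $A^g$ moves at least one $v_i$, and to make that induction close one must strengthen the statement to completely reducible, not necessarily faithful, modules.

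The second, larger gap is the homogeneous case, which is where all the difficulty and the hypothesis $p>2$ actually reside: it is only gestured at. Homogeneity of $V|_{O_p(G)}$ by itself gives little (essentially that $Z(O_p(G))$ is cyclic); one needs a further reduction to a quasi-primitive module for all of $G$ before the cyclic/symplectic-type structure of $O_p(G)$ becomes available, and even then an actual argument is required to produce the vector $v$ for odd $p$ and to see why quaternion-type obstructions block the case $p=2$. None of this is supplied. For comparison, the paper itself does not prove this lemma at all; it quotes it as \cite[Thm.~4.2]{INW}. Unless you intend to reconstruct that Hall--Higman-style analysis in full, the honest course is to cite it as the authors do.
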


Given a finite group $G$ and a subgroup $A\leq G$ we say that the pair
$(G,A)$ satisfies $(\ast)$ if for every conjugacy class $C$ of $G$ there exists
$g\in C$ such that $A$ is subnormal in $\seq{A,g}$.

\begin{proof}[Proof of Theorem~A]
We argue by contradiction: assume that $G$ is a finite group, $A$ an odd
$p$-subgroup of $G$, $A\not\leq O_p(G)$, and the pair $(G,A)$
satisfies the condition  $(\ast)$. Moreover we assume that that $|G|+|A|$ is
minimal with respect to these conditions. We proceed by steps.

\smallskip\noindent
{\bf Step 1.} We have $O_p(G)=1$.

Indeed, note that $(G/O_p(G),AO_p(G)/O_p(G))$ satisfies $(\ast)$, therefore if
$O_p(G)\ne 1$ by our minimal assumption, we would have that
$AO_p(G)/O_p(G)\leq O_p(G/O_p(G))=1$, which is a contradiction.

\smallskip\noindent
{\bf Step 2.} We have $|A|=p$.

Let $B$ be a proper subgroup of $A$ and note that $(G,B)$ satisfies $(\ast)$.
By the minimal choice, every proper non-trivial
subgroup of $A$ lies in $O_p(G)$.
Since $O_p(G)=1$, we conclude that $B=1$, i.e., $A$ has order $p$.\\
>From now on we set $A=\seq{a}$.

\smallskip\noindent
{\bf Step 3.} $G$ has a unique minimal normal subgroup $M$.

Assume that $M$ and $N$ are two distinct minimal normal subgroups of $G$ and
assume also that $A\not\leq N$. Then $(G/N,AN/N)$ satisfies $(\ast)$ and so,
by our minimal choice we have that $AN/N\leq O_p(G/N)$. In particular,
$AN\su G$. Then also $AN\cap M\su G$. If $A\leq M$, then
$A=A(N\cap M)=AN\cap M$, and we have that $A\su G$. Since $A$ is
a $p$-subgroup, then $A\leq O_p(G)$, which is not the case.
Therefore $A\not\leq M$ and by the same arguments as for $N$ above, we conclude
that $AM$, and thus also $AM\cap AN$, is subnormal in $G$. Finally note that
$M\cap AN=1$. Indeed, otherwise we have that $A\leq MN$, as $|A|=p$ and
$M\cap N=1$. Now $MN/N$ is a minimal normal subgroup of $G/N$ and, being
isomorphic to $M$, it is not a $p$-subgroup by Step~1. Then
$MN/N\cap O_p(G/N)= 1$, forcing $AN/N=1$ a contradiction. Thus $M\cap AN=1$ and
$A=A(M\cap AN)=AM\cap AN$, therefore is subnormal in $G$, which again
contradicts Step 1.

\smallskip\noindent
{\bf Step 4.} $M$ is non-abelian.

Assume that $M$ is an elementary abelian $q$-group, with $q$ a prime different
from $p$, by Step~1. Let $Y/M=O_p(G/M)$, then by our minimal assumption
$A\leq Y$. We take $P\in\Syl_p(Y)$ such that $A\leq P$. By the Frattini
argument $G=YN=MN$, with $N=N_G(P)$. Now $[N_M(P),P]\leq M\cap P=1$, thus
$N_M(P)=C_M(P)$. Also, $M$ being normal and abelian, $C_M(P)$ is
normalised by both $M$ and $N_G(P)$, thus $C_M(P)\trianglelefteq G$. As $M$ is
the unique
minimal normal subgroup of $G$, we have that either $C_M(P)=1$ or $C_M(P)=M$.
Note that in the latter case $P$ is normal in $G$, which contradicts Step 1.
Therefore we have that $G$ is a split extension $G=M\rtimes N$.
Moreover, since $M$ is the unique minimal normal subgroup of $G$, $C_N(M)=1$,
i.e., $N$ acts faithfully on $M$. Let $m$ be an arbitrary non-trivial element of $M$. By
condition $(\ast)$ there exists some $n\in N$ such that $A\su\seq{A,m^n}$.
In particular the subgroup $V:=\seq{a,a^{m^n}}$ is a $p$-group. As $m^n\in M$,
$MV=MA$, and therefore
$$V=MA\cap V=(M\cap V)A=A,$$
as $M$ and $V$ have coprime orders. Therefore $\seq{a}=\seq{a^{m^n}}$, i.e.,
$m^n$ normalises $A$. In particular, as $M$ is a normal $q$-subgroup, we have
that
$$[a, m^n]\in A\cap M=1,$$
which means that $A\subseteq C_N(m)^n$. By the arbitrary choice of $m$ in $M$ we
have reached a contradiction with Lemma \ref{Lem_W2_sol}.

\smallskip\noindent
{\bf Step 5.}

Let $M=S_1\times S_2\times \ldots \times S_n$ be the unique minimal normal
subgroup of $G$, with all the $S_i$'s isomorphic to a finite non-abelian
simple group $S$. Denote by $\pi_i$ the projection map of $M$ onto $S_i$,
for every $i=1,2,\ldots n$. Let also $1=x_1, x_2,\ldots ,x_n$ be elements of
$G$ such that $S_1^{x_i}=S_i$, for $i=1,2,\ldots, n$.
Let $K$ be the kernel of the permutation action of $G$ on the set
$\cS:=\set{S_1,S_2,\ldots , S_n}$, i.e.,
$$K:=\bigcap_{i=1}^n N_G(S_i).$$
We treat separately the two cases: $A\not\leq K$ and $A\leq K$.

\smallskip\noindent
{\bf{Case 1. $A\not\leq K$.}}

\noindent
Set $\overline{G}:=G/K$ and use the ``bar'' notation to denote subgroups and
elements of $\overline{G}$. By induction, we have that
$\overline{A}\leq O_p(\overline{G})$. Since $p$ is odd, by Gluck's Theorem
(\cite[Cor.~5.7]{MW}) there exists a proper non-empty subset $\cR\subset \cS$
such that
$$\overline{G}_\cR\cap O_p(\overline{G})=\overline{1},$$
where $G_\cR$ denotes the stabiliser in $G$ of the set $\cR$. Without loss of
generality, we may assume $\cR=\set{S_1,\ldots,S_r}$ for some $r< n$.
Let $q$ be any prime different from $p$ dividing $|S_1|$, and let $s_1\in S_1$
be any non-trivial $q$-element. Set
$$s_\cR:=s_1s_1^{x_2}\ldots s_1^{x_r}\in M.$$
By assumption, there exists a $G$-conjugate of $s_\cR$, say $y:=s_\cR^g$,
such that $A$ is subnormal in $\seq{a,y}$, in particular $\seq{a,a^y}$ is
a $p$-subgroup. Thus $[a,y]=a^{-1}a^y$ is a $p$-element.
Also, $[a,y]$ is $G$-conjugate to $[a^{g^{-1}},s_\cR]$.
Since $\overline{a^{g^{-1}}}$ is a non-trivial element of $O_p(\overline{G})$,
$a^{g^{-1}}$ does not stabilise $\cR$, therefore there exists some $i\in \cR$
such that $(S_i)^{a^{g^{-1}}}=S_j$ for some $j\not\in \cR$, this forces that
$\pi_j([a^{g^{-1}},s_\cR])$ is a non-trivial $q$-element of $S_j$, and since
$p\ne q$, $[a^{g^{-1}},s_\cR]$ cannot be a non-trivial $p$-element of $M$. So
we have $[a,y]=1$, but then $\seq{a}$ stabilises $\cR$, which is in
contradiction with $\overline{G}_\cR\cap O_p(\overline{G})=\overline{1}$.

\smallskip\noindent
{\bf{Case 2. $A\leq K$.}}

\noindent
We consider first the case in which $A\leq C_G(S_i)$, for every $i=1,\ldots,n$.
Then
$$A\leq \bigcap_{i=1}^n C_G(S_i)=(C_G(S_1))_G,$$
the normal core of $S_1$ in $G$. Since $M$ is the unique minimal normal
subgroup of $G$ and $M\not \leq  (C_G(S_1))_G$, we necessarily have that
$(C_G(S_1))_G=1$, and so $A=1$, which is a contradiction.

\noindent
Assume now that $A$ does not centralise some $S_i$, say $S_1$.
Let $1\ne s_1\in S_1$ and let $m=s_1^{\null}s_1^{x_2}\ldots s_1^{x_n}\in M$.
Let $g\in G$ be such that $A\su\seq{A,m^g}$. Writing
$m^g=h_1k$, with $h_1=s_1^{x_ig}\in S_1$ for some $i=1,\ldots,n$,
and $k=\prod_{j\ne i}s_1^{x_jg}\in S_2\times\ldots \times S_n$ we have that
for every $u,v\in \mathbb{N}$
$$[a^u,(m^g)^v]=[a^u,h_1^vk^v]=[a^u,k^v][a^u,h_1^v]^{k^v}
                          =[a^u,h_1^v][a^u,k^v],$$
since $A$ normalises each $S_i$ and $S_1$ is centralised by $S_j$, for every
$j\ne 1$. In particular we have that
$$[A,\seq{m^g}]= [A,\seq{h_1}]\times [A,\seq{k}].$$
Therefore $\pi_1([A,\seq{m^g}])=[A,\seq{h_1}]$ is a $p$-subgroup
of $S_1$. Finally note that $h_1=s_1^{x_ig}$, and so for the arbitrary element
$s_1\in S_1$ there exists $x_ig\in N_G(S_1)$ such that
$A\su\seq{A, s_1^{x_ig}}$. In particular if $s_1$ is chosen to be a
$p'$-element of $S$ we have that $s_1$ normalises a non-trivial
$p$-subgroup of $G$ which is generated by $G$-conjugates of $A$. Therefore,
as $A\not\leq C_G(S_1)$, we have proved that the almost simple group
$\tilde G:=N_G(S_1)/C_G(S_1)$ contains a non-trivial subgroup of
order~$p$, namely $\tilde A:=AC_G(S_1)/C_G(S_1)$, such that for every
cyclic $p'$-subgroup $\widetilde{X}$ of $F^*(\tilde G)$ we have that
$\NN_{\tilde G}^{\tilde A}(\tilde X,p)\ne \varnothing$. This is
in contradiction to Theorem~\ref{thm:almost-simple}.
\end{proof}

We end this section by showing with an easy example that for $p=2$ Theorem A
is no more true.

\begin{exmp}
Let $H$ be a Sylow $2$-subgroup of $\GL_2(3)$, namely $H$ is a semidihedral
group of order 16, acting, in the natural way, on the natural module
$M\simeq C_3\times C_3$. Let $G$ be the semidirect product $M\rtimes H$, and
$a$ a non-central involution of $H$. Since $O_2(G)=1$, $\seq{a}$ is not
subnormal in $G$. We show that $\seq{a}$ satisfies $(\ast)$.
A non-trivial element of $G$ has order either a $2$-power, or 3, or 6. In the
first case, it is conjugate to an element $g$ of $H$ and so $\seq{a}$ is
subnormal in the $2$-group $\seq{a,g}$.
In the second case, the element lies in $M$, but note that
every element of $M$ centralises a conjugate in $H$ of $a$, i.e.,
$M=\bigcup_{x\in H}C_M(a^x)$, thus there exists an $x\in H$ such that
$\seq{a}$ is subnormal in $\seq{a,g^x}\simeq C_6$. In the latter case, up
to conjugation, we have $\seq{a,g}=\seq{g}$.
\end{exmp}

\section{Other conditions for subnormality}   \label{sec:others}
As stated in the Introduction, in this Section we briefly analyse
similar variations related to the other criteria for subnormality given by the
original Theorem of Wielandt (namely conditions (iii) and (iv)). We see that in
general these generalisations fail to guarantee the subnormality of odd $p$-
subgroups.

Given a finite group $G$ and an odd $p$-subgroup $A$ of $G$, we consider the
following condition:
\begin{itemize}
\item[($\ast \ast$)] \textit{for every conjugacy class $C$ of $G$ there exists
  $g\in C$ such that $A\su\seq{A,A^g}$}.
\end{itemize}

\noindent
It is trivial that condition $(\ast)$ implies $(\ast \ast)$.\\
The next result shows that $(\ast\ast)$ is enough to guarantee the
subnormality of $A$ in the class of finite solvable groups.

\begin{thm}   \label{thm:one}
 Let $G$ be a finite solvable group and $p$ a prime. If there exists a
 $p$-subgroup $A$ of $G$ satisfying $(\ast \ast)$ then $A\leq O_p(G)$.
\end{thm}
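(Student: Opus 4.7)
My plan is to run the reduction in the proof of Theorem~A almost verbatim; the point is that solvability allows us to finish at Step~4 (the case of abelian minimal normal subgroup) without invoking the almost simple ingredient Theorem~C. Concretely, I would pick a pair $(G,A)$ with $|G|+|A|$ minimal among counter-examples and $A\not\le O_p(G)$. Observing that whenever $A\su\seq{A,A^g}$ the group $\seq{A,A^g}$ is automatically a $p$-group (a classical result of Wielandt), condition $(\ast\ast)$ descends both to subgroups $B\le A$ (since $\seq{B,B^g}\le\seq{A,A^g}$ is then a $p$-group in which $B$ is subnormal) and to quotients $G/K$ (as images of subnormal series are subnormal). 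Steps~1 and~2 of the proof of Theorem~A therefore go through and give $O_p(G)=1$ and $|A|=p$.

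Next I would repeat Step~3 verbatim: assuming two distinct minimal normal subgroups $M$ and $N$ with, say, $A\not\le N$, minimality of $(G/N,AN/N)$ forces $AN\su G$; combined with $M\cap N=1$, $O_p(G)=1$ and the Dedekind identity, this yields $A\su G$, a contradiction. Hence $G$ has a unique minimal normal subgroup $M$, which is elementary abelian of exponent $q$ for some prime $q\ne p$ because $G$ is solvable and $O_p(G)=1$. Setting $Y/M=O_p(G/M)$, picking $P\in\Syl_p(Y)$ containing $A$ and $N:=N_G(P)$, the Frattini argument together with the uniqueness of $M$ yields $G=M\rtimes N$, $N$ acting faithfully and irreducibly on $M$, $P\lhd N$, and $A\le P\le O_p(N)$, exactly as in Step~4 of the proof of Theorem~A.

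For the final contradiction, take $1\ne m\in M$ and use $G=MN$ with $M$ abelian to write the $G$-conjugate supplied by $(\ast\ast)$ in the form $m^n$ with $n\in N$; then $\seq{A,A^{m^n}}$ is a $p$-group. For every $a\in A$ we have $a^{m^n}=a\cdot[a,m^n]$, so the commutator $[a,m^n]\in M$ is a $q$-element inside a $p$-group, forcing $[a,m^n]=1$. Hence $m^n$ centralises $A$, so $A\le C_G(m^n)\cap N=C_N(m)^n$, and letting $m$ vary we conclude that $A\subseteq\bigcup_{n\in N}C_N(m)^n$ for every $m\in M$, in direct contradiction with Lemma~\ref{Lem_W2_sol} applied to the faithful irreducible $N$-module $M$ (whose hypotheses are satisfied since $p$ is odd, $p\ne q$, and $A\le O_p(N)$). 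The main obstacle is precisely this last step: one must check that the weaker hypothesis $(\ast\ast)$, rather than $(\ast)$, is still strong enough to force the centralisation $[a,m^n]=1$; this works because condition $(\ast\ast)$ hands us directly a $p$-group containing the $q$-element $[a,m^n]$, and coprimality of $|A|$ and $|M|$ finishes the job.
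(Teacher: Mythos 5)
Your proposal is correct and follows essentially the same route as the paper's own proof: reduce to a unique minimal normal subgroup $M$ that is elementary abelian of exponent $q\ne p$, split $G=M\rtimes N$ via the Frattini argument, use $(\ast\ast)$ together with the fact that $\seq{A,A^{m^n}}$ is a $p$-group to force $[a,m^n]=1$, and contradict Lemma~\ref{Lem_W2_sol}. The only (harmless) differences are organisational: you run a minimal-counterexample version of Steps 1--3 of Theorem~A where the paper inducts directly on $|G|$ and reduces to $A$ cyclic, and you extract $[a,m^n]=1$ from $M\cap\seq{A,A^{m^n}}=1$ directly rather than via the paper's Dedekind computation $V=(M\cap V)A=A$.
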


\begin{proof}
As $A$ is nilpotent, every subgroup of $A$ also satisfies $(\ast \ast)$.
Therefore we can assume that $A$ is cyclic, say $A=\seq{a}$.\\
We argue by induction on the order of $G$. \\
Note that if $M$ is a minimal normal subgroup of $G$ the assumption holds
for the group $G/M$. Thus in particular, we may assume
that $G$ admits a unique minimal normal subgroup, say $M$, and that
$aM\in O_p(G/M):=Y/M$. Now if $M$ is a $p$-group we are done. Let $M$ be an
elementary abelian $q$-group, with $q\ne p$. Take $P$ a Sylow
$p$-subgroup of $Y$ containing $a$, so that by
the Frattini argument $G=YN=MN$, with $N=N_G(P)$.
Being $M$ minimal normal in $G$, we have that
$C_M(P)=M\cap N_G(P)=1$ (otherwise $Y=M\times P$ and $a\in O_p(G)$). Thus
$G=M\rtimes N$. Since also $M$ is the unique minimal normal subgroup of $G$ we
have $C_N(M)=1$. Let $m$ be a non-trivial element of $M$. By assumption there
exists $n\in N$ such that the subgroup $V:=\seq{a,a^{m^n}}$ is nilpotent. In
particular, as $m^n\in M$, $MV=M\seq{a}$, and therefore
$$V=M\seq{a}\cap V=(M\cap V)\times \seq{a},$$
forcing $\seq{a}=\seq{a^{m^n}}$. So
$$[a, m^n]\in A\cap M=1,$$
which means that $A\subseteq C_N(m)^n$. By the arbitrary choice of $m$ in $M$
we have reached a contradiction with Lemma~\ref{Lem_W2_sol}.
\end{proof}

For non-solvable groups the situation is completely different and the following
example shows that there are almost simple groups with non-trivial
$p$-subgroups satisfying $(**)$.

\begin{exmp}
Every subgroup of $\fS_8$ generated by a $3$-cycle satisfies $(\ast\ast)$.
Indeed, let $A=\seq{(123)}$ and $C$ a conjugacy class of $\fS_8$.
If every element of $C$ is the product of at least three disjoint cycles of
length $>1$, then $C$ contains $g=(14...)(25...)(36...)...$ and so $A$ is
subnormal in the abelian subgroup $\seq{A,A^g}$.
If $C$ contains a $k$-cycle, then $k\geq 6$ otherwise there exists $g$ in $C$
fixing pointwise $\{1,2,3\}$ and so $\seq{A,A^g}=A$. But then take
$g=(142536...)\in C$ and argue as before. The remaining case is when the
elements of $C$ are products of two disjoint cycles and fix at most two points.
If one of these cycle is a $3$-cycle, then $g=(123)...\in C$, forcing again
$A^g=A$. We can then assume that one cycle is at least a $2$-cycle and the other
a $4$-cycle, but then take $g=(14...)(2536...)\in C$ and conclude as before.

A similar behaviour can be noticed for every prime $p$, if $n$ is big
enough.
\end{exmp}

\noindent
{\bf Acknowledgement.} The first author expresses his gratitude to C. Casolo
for proposing this topic and for helpful discussions.



\begin{thebibliography}{13}

\bibitem{Asch}
{\sc M. Aschbacher}, \emph{Finite Group Theory}. Cambridge Studies in
  Advanced Mathematics, 10. Cambridge University Press, Cambridge, 1986.

\bibitem{AK}
{\sc M. Aschbacher, P. B. Kleidman}, On a conjecture of Quillen and a lemma of
  Robinson. \emph{Arch. Math. (Basel) \bf55} (1990), 209--217.

\bibitem{CC}
{\sc C. Casolo}, A criterion for subnormality and Wielandt complexes in
  finite groups. \emph{J. Algebra \bf169} (1994), 605--624.

\bibitem{Atlas}
{\sc J. H. Conway, R. T. Curtis, S. P. Norton, R. A. Parker, R. A. Wilson},
  \emph{Atlas of Finite Groups}. Oxford University Press, Eynsham, 1985.

\bibitem{GLS}
{\sc D.~Gorenstein, R.~Lyons, R.~Solomon}, \emph{The Classification of
  the Finite Simple Groups. Number 3}. American Mathematical Society,
  Providence, RI, 1998.

\bibitem{G1}
{\sc S.~Guest}, A solvable version of the Baer-Suzuki theorem.
  \emph{Trans. Amer. Math. Soc. \bf 362} (2010), 5909--5946.

\bibitem{G2}
{\sc S.~Guest}, Further solvable analogues of the Baer--Suzuki theorem and
  generation of nonsolvable groups. \emph{J. Algebra \bf 360} (2012), 92--114.

\bibitem{GM1}
{\sc R.M.~Guralnick, G.~Malle}, Variations on the Baer-Suzuki theorem.
  \emph{Math. Z. \bf 279} (2015), no. 3-4, 981--1006.

\bibitem{GM2}
{\sc R.M.~Guralnick, G.~Malle}, Products and commutators of classes in
  algebraic groups. \emph{Math. Ann. \bf 362} (2015), 743--771.

\bibitem{GR}
{\sc R.M.~Guralnick, G.R.~Robinson}, On extensions of the Baer-Suzuki theorem.
  \emph{Israel J. Math. \bf82} (1993), 281--297.

\bibitem{He74}
{\sc C. Hering}, Transitive linear groups and linear groups which contain
  irreducible subgroups of prime order. \emph{Geom. Dedicata \bf2} (1974),
  425--460.

\bibitem{INW}
{\sc I.M. Isaacs, G. Navarro, T.R. Wolf}, Finite group elements where no
  irreducible character vanishes. \emph{J. Algebra \bf 222} (1999), 413--423.

\bibitem{La07}
{\sc R. Lawther}, $2F$-modules, abelian sets of roots and 2-ranks.
  \emph{J. Algebra \bf 307} (2007), 614--642.

\bibitem{LS}
{\sc J.~Lennox, S.E.~Stonehewer}, \emph{Subnormal Subgroups of Groups}.
  Oxford Mathematical Monographs, Oxford University Press, 1987.

\bibitem{MT}
{\sc G. Malle, D. Testerman}, \emph{Linear Algebraic Groups and Finite Groups
  of Lie Type}. Cambridge Studies in Advanced Mathematics, 133.
  Cambridge University Press, Cambridge, 2011.

\bibitem{MW}
{\sc O.~Manz, T.R.~Wolf}, \emph{Representations of Solvable Groups}.
  London Mathematical Society Lecture Note Series, 185. Cambridge University
  Press, Cambridge, 1993.

\bibitem{X}
{\sc W.J.~Xiao}, Glauberman's conjecture, Mazurov's problem and Peng's problem.
  \emph{Sci. China Ser. A \bf 34} (1991), no. 9, 1025--1031.

\end{thebibliography}
\end{document}